\documentclass[12pt]{amsart}
\usepackage{amsthm,amsmath,amssymb}
\thispagestyle{empty}

\numberwithin{equation}{section}

\def\ca{{\mathcal A}}
\def\cb{{\mathcal B}}
\def\cc{{\mathcal C}}

\def\bc{{\mathbb C}}

\def\baf{{\mathbb F}}

\def\bn{{\mathbb N}}

\def\br{{\mathbb R}}

\def\bz{{\mathbb Z}}

\def\a{\alpha}

\def\b{\beta}

\def\j{\iota}
\def\g{\gamma}

\def\l{\lambda}

\def\f{\varphi}

\theoremstyle{plain}
\newtheorem{lemma}{Lemma}[section]
\newtheorem{proposition}[lemma]{Proposition}
\newtheorem{theorem}[lemma]{Theorem}
\newtheorem{corollary}[lemma]{Corollary}

\theoremstyle{definition}

\newtheorem{remark}[lemma]{Remark}
\newtheorem{definition}[lemma]{Definition}

\theoremstyle{remark}

\begin{document}


\begin{center}
Dedicated to the memory of Uffe V. Haagerup
\end{center}

\title[Rapid decay in  actions on a C*-algebra  ]{\textsc{  C*-dynamical rapid decay}}

\author[E.~Christensen]{Erik Christensen}
\address{\hskip-\parindent
Erik Christensen, Mathematics Institute, University of Copenhagen, Copenhagen, Demark.}
\email{echris@math.ku.dk}
\date{\today}
\subjclass[2010]{ Primary: 46L55, 37A55. Secondary: 22D55, 46L07.}
\keywords{Haagerup, C*-algebra, discrete group, crossed product,  rapid decay}

\begin{abstract}
Some well known results by Haagerup, Jolissaint and de la Harpe may be extended to the setting of a reduced crossed product of a C*-algebra $\ca$ by a  discrete group $G.$ We show that  for many  discrete groups, which include Gromov's hyperbolic groups and finitely generated discrete groups of polynomial growth, an inequality of the form 
$$\|X\| \leq C \sqrt{\sum_{g \in G}   (1+|g|)^4 \|X_g\|^2 } $$
holds for any finitely supported operator $X$ in the reduced crossed product.
\end{abstract}

\maketitle

\section{Introduction}
Any result in classical harmonic analysis naturally raises the question, does this extend to a non commutative setting ? In the situation of a discrete dynamical system, you may work with the group of integers $\bz$ which acts on a compact space as the group of homeomorphisms generated by a single homeomorphism. By Gelfand's fundamental theorem we know that the set of compact topological spaces correspond to unital commutative C*-algebras, and then the  classical discrete dynamical system as described above  can be made non commutative in 2 ways, either by studying a general non commutative C*-algebra equipped with a single *-automorphism or by  investigating properties of an action of a general discrete  group of homeomorphisms on a compact space. The construction named {\em reduced crossed product } of a C*-algebra by a discrete group encodes a set up for the study of the action of  a general discrete group on  a C*-algebra. 

Here we will extend results, by Haagerup  from \cite{Ha} on  free non abelian groups $\baf_d,$   by Jolissaint   \cite{Jo} on the concept named {\em rapid decay} and by de la Harpe \cite{PdH} on hyperbolic groups, from the setting of a {\em  reduced group C*-algebra } to the setting of a {\em reduced crossed product C*-algebra. }   The C*-algebra $\ca $ upon which the discrete group $G$ acts by *-automorphisms $\a_g$ may be abelian or non abelian, and the case where the algebra is the trivial $\ca = \bc I$ is the reduced group C*-algebra case. 

Uffe Haagerup's article \cite{Ha}  has been very influential in the study of discrete groups and von Neumann algebras of type II$_1,$ and his name and  results are linked to many fundamental mathematical properties such as the {\em Haagerup approximation property } \cite{CCJJV}, \cite{HK},  the {\em completely bounded approximation property } \cite{CH}  and the {\em Haagerup tensor product } \cite{Pa}.  We will not go into the study of any of these aspects but concentrate on an extension of the first basic result from \cite{Ha} and lift it from the reduced group case to the reduced crossed product case for discrete hyperbolic groups acting on general C*-algebras. In this setting we prove that there exists an  $C>0,$ depending on the group only, such that for any  linear combination $X = \sum L_gX_g$ in the algebraic reduced crossed product we have the following inequality
\begin{equation}  \label{Hag1}  \|X\| \leq C \sqrt{\sum_{g \in G }(1+ |g|)^4\|X_g\|^2}\,
\end{equation} 
which is a direct generalization of Haagerup's estimate in the group algebra case, since for these groups $C< 2.$  
In the article \cite{Jo} Jolissaint showed that, in the group algebra case,  inequalities like (\ref{Hag1}) may be obtained for many other discrete groups, which are not free, and he introduced the property named {\em rapid decay } for a discrete group $G $ with a length function $|g|,$ if  there exist positive constants $C, r$ such that for any $x = \sum x_g\l_g$ in $\bc[G] $ 

\begin{equation}  \label{Jo1}  \|x\|_{op} \leq C  \sqrt{\sum (1+|g|)^{2r}|x_g|^2} .
\end{equation}

Shortly after Jolissaint had obtained his results they were extended to the setting of hyperbolic groups by de la Harpe \cite{PdH}, and it turns out that  it is possible to extend both Jolissaint's and de la Harpe's results to the crossed product setting. The basic insight which makes this possible was formulated by Jollisaint in his lemmas 3.2.2 and 3.2.3. Then de la Harpe proved that these lemmas are true in the setting of discrete finitely generated hyperbolic groups, so Jolissaint's findings could be extended. Here we have collected the statements from Jolissaint's basic lemmas into a property (J) which a discrete group with a length function may have, and then we prove that the inequality (\ref{Hag1}) holds,   in any reduced crossed product of a C*-algebra and a group satisfying property (J). It seems possible to us that property (J) implies hyperbolicity, but we have very little experience in dealing with such a question. 

We have also considered reduced crossed products of C*-algebras by  discrete finitely generated groups which have polynomial growth. In this case it is possible to get the following - much better - result. If a discrete finitely generated group has polynomial growth then there exist $C> 0,$ $s>0$ such that for any finitely supported operator $X = \sum L_gX_g$ in a reduced crossed product $C^*_r(\ca\rtimes_\a G)$ we have 
\begin{align} \label{Pgrow}  
\|X\| \leq C& \|  \sum_{g \in G} (1 +|g|)^{2s} L_gX_gX_g^*L_g^*\|^{\frac{1}{2}} \\ \notag \|X\| \leq C&  \|  \sum_{g \in G} (1 +|g|)^{2s} X_g^*X_g\|^{\frac{1}{2}}
\end{align}

The advantage of (\ref{Pgrow}) over (\ref{Jo1}) is that usually
 $$ \|\sum_{g \in C_k} X_g^*X_g\|^{\frac{1}{2}} < \big(\sum_{g \in C_k} \|X_g^*X_g\|\big)^{\frac{1}{2}}.$$
 
\section{ Notation and norms}
Most of the content of this section is well known in the setting of a reduced group C*-algebra,  but here we deal with a reduced crossed product C*-algebra. In order to be able to generalize the methods from Haagerup's paper \cite{Ha}, we need a characterization of the operator norm in a reduced crossed product of a C*-algebra by a discrete group, and this result, which we think might be known but unpublished,  is presented in Proposition \ref{cpnorm} in a self contained frame.    We start with a well known  operator theoretical version of Cauchy-Schwarz' inequality, \cite{Pa}.

\begin{definition} \label{crNorm}
Let $H,\,  K $ be  Hilbert spaces, $J$ be an index set and 
$(a_\j)_{(\j \in J)}$ a family of bounded operators in $B(H,K).$
\begin{itemize}
\item[(i)] If the sum $\sum_\j a_\j^*a_\j $ is ultrastrongly convergent  in    $B(H)$ we say that the family $(a_\j)$ is {\em column bounded} with column norm $\|(a_\j)\|_c := \|\sum a_\j^* a_\j\|^{\frac{1}{2}}.$ 
\item[(ii)] If the sum $\sum_\j a_\j a_\j^* $ is ultrastrongly convergent    in $B(K)$ we say that the family $(a_\j)$ is {\em row bounded} with row norm $\|(a_\j)\|_r := \|\sum a_\j a_\j^*\|^{\frac{1}{2}}.$ 
\end{itemize} 
\end{definition}
\begin{proposition} \label{csOpIneq}
Let $H,\,  K $ be  Hilbert spaces, $J$ be an index set and 
$(a_\j)_{(\j \in J)},$ $(b_\j)_{(\j \in J)}$ be column bounded  families of operators in $B(H,K).$ 
\begin{itemize}
\item[(i)] The sum $\sum_\j a_\j^* b_\j $ is ultrastrongly convergent in $ B(H)$  and the operator norm of the sum satisfies $\|\sum_\j a_\j^* b_\j \| \leq \|(a_\j)\|_c \|(b_\j)\|_c.$
\item[(ii)] There exists a column bounded family $(e_\j)_{(\j \in J)}$ of column norm at most $1$  such that for the positive  bounded operator $h$ on $H$ defined by $h := (\sum_\j a_\j^* a_\j)^{\frac{1}{2}}$ we have for each $\j  \in J,  \, a_\j = e_\j h $ and the sum $\sum_\j e_\j^*e_\j$ equals the range projection of $h.$
\item[(iii)] Let $k := (\sum_\j b_\j^* b_\j)^{\frac{1}{2}},$ then there exists a contraction $c$ in $B(H)$ such that $\sum_\j b_\j^* a_\j = kch.$
\end{itemize} 
\end{proposition}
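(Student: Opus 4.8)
The plan is to realize each column bounded family as a single bounded \emph{column operator} into a large Hilbert space, and then to read off (i)--(iii) from elementary operator theory. Set $\wt K := \bigoplus_{\j \in J} K$, the Hilbert space direct sum, and let $\Pi_\j \colon \wt K \to K$ denote the $\j$-th coordinate projection, so that the $\Pi_\j^* \Pi_\j$ are mutually orthogonal projections on $\wt K$ summing ultrastrongly to the identity. Given a column bounded family $(a_\j)$, the hypothesis that $\sum_\j a_\j^* a_\j$ converges ultrastrongly to a positive operator is exactly what guarantees that for every $\xi \in H$ one has $\sum_\j \|a_\j \xi\|^2 = \langle (\sum_\j a_\j^* a_\j)\xi, \xi\rangle < \infty$, so that $A\xi := (a_\j \xi)_\j$ defines a bounded operator $A \in B(H, \wt K)$. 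Computing $\langle A\xi, A\eta\rangle$ shows $A^* A = \sum_\j a_\j^* a_\j$, whence $\|A\| = \|(a_\j)\|_c$. I would attach $B \in B(H, \wt K)$ to $(b_\j)$ in the same way.

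First I would dispose of (i). Since $A^* B$ is a genuine bounded operator with $\|A^* B\| \le \|A\|\,\|B\| = \|(a_\j)\|_c \|(b_\j)\|_c$, it remains only to identify it with the claimed sum and to establish ultrastrong convergence. For finite $F \subseteq J$ the partial sum is $\sum_{\j \in F} a_\j^* b_\j = A^* P_F B$, where $P_F = \sum_{\j \in F} \Pi_\j^* \Pi_\j$ is the projection onto the coordinates in $F$. As $F$ increases, $P_F \uparrow I_{\wt K}$; since the net $(P_F)$ is uniformly bounded, strong convergence upgrades to ultrastrong convergence on bounded sets, and separate ultrastrong continuity of multiplication by the fixed operators $A^*$ and $B$ then gives $A^* P_F B \to A^* B$ ultrastrongly, identifying the limit as $\sum_\j a_\j^* b_\j$.

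For (ii) and (iii) I would invoke the polar decomposition. Write $A = Vh$ with $h = (A^* A)^{1/2} = (\sum_\j a_\j^* a_\j)^{1/2}$ and $V \in B(H, \wt K)$ a partial isometry whose initial projection $V^* V$ is the range projection of $h$. Putting $e_\j := \Pi_\j V \in B(H,K)$ gives $a_\j = \Pi_\j A = \Pi_\j V h = e_\j h$, while $\sum_\j e_\j^* e_\j = V^*(\sum_\j \Pi_\j^* \Pi_\j)V = V^* V$, which is the range projection of $h$ and has norm at most $1$; this is precisely (ii). For (iii) I would likewise write $B = Wk$ with $k = (\sum_\j b_\j^* b_\j)^{1/2}$ and $W$ a partial isometry, so that $\sum_\j b_\j^* a_\j = B^* A = k\,(W^* V)\,h$, and $c := W^* V$ is a contraction on $H$, being a product of two partial isometries.

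The computations are all routine once the column operator is in hand, so the only genuine point requiring care is the first one: verifying that $A\xi := (a_\j\xi)_\j$ really lands in $\wt K$ and defines a \emph{bounded} operator with $A^* A = \sum_\j a_\j^* a_\j$, together with the attendant upgrade from strong to ultrastrong convergence of the partial sums. These rest on the ultrastrong convergence built into the definition of column boundedness and on the fact that the strong and ultrastrong operator topologies agree on norm-bounded sets; once this is secured, (i)--(iii) follow formally.
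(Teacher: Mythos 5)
Your proof is correct and takes essentially the same approach as the paper: you realize each column bounded family as a single bounded column operator $A\in B(H,\bigoplus_{\iota\in J}K)$ (the paper phrases this as an element of the operator space $M_{(J,1)}(B(H,K))$), deduce (i) from the operator product, get (ii) from the polar decomposition of $A$, and obtain (iii) by composing the two polar decompositions --- indeed your contraction $c=W^*V$ is literally the paper's $c=\sum_\iota f_\iota^*e_\iota$. The only difference is that you spell out the routine verifications (boundedness of $A$, $A^*A=\sum_\iota a_\iota^*a_\iota$, and the ultrastrong convergence of the partial sums) that the paper leaves implicit.
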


\begin{proof}
The families $ (a_\j)$ and $(b_\j)$ represent bounded column operators in the operator space $M_{(J, 1)}\big(B(H,K)\big)$ and the statement (i) follows from properties of the operator product.

 The statement (ii) follows from the polar decomposition applied to the column operator $(a_\j).$

 The result in statement (ii) may be applied to the column operator $(b_\j) $ such that each 
$b_\j  = f_\j k, $ we can then define a contraction $c$ in $B(H) $ by $c:= \sum_\j f_\j^*e_\j$ and statement (iii) follows. 
\end{proof}

The rest of this article takes place in the setting of  the reduced  crossed product of a C*-algebra $\ca$ by a discrete group $G,$ which acts on $\ca $ by the *-automorphisms $\a_g.$ We made a study of the properties of this crossed product  in the article \cite{Ch} and we will use most  of the notation and several of the  results of that article below.  A basic point of view in Section 2 of \cite{Ch} is  that there are many facts related to properties of the coefficients of a Fourier series which generalize to properties of the coefficients of an element in a reduced discrete C*-crossed product. 

We recall that any  element $X$ in $  \cc := C^*_r(\ca \rtimes_\a G)$ has a Fourier series expansion $X \sim \sum_{g \in G} L_g X_g,$ where the sum is convergent in the norm $\|.\|_\pi$ described in Proposition 2.2 of \cite{Ch}. A simple computation shows that for $X \sim \sum L_gX_g$ we have that the column and row norms of the family $(L_gX_g)_{(g \in G)}$ may be computed as \begin{align}
\|(L_gX_g)_{(g\in G)}\|_c^2& = \|\pi(X^*X) \| = \|(X^*X)_e\|  \\ \notag \|(L_gX_g)_{(g\in G)}\|_r^2 &= \|\pi(XX^*) \| = \|(XX^*)_e\|. 
\end{align}   
In particular we notice the following proposition. \begin{proposition}
Let $X \sim \sum_g L_g X_g $ be an element in $\cc$ then the sum converges in the column norm. 
\end{proposition}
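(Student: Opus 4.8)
The plan is to read the statement as the assertion that the finite partial sums $S_F := \sum_{g\in F} L_gX_g$, indexed by the finite subsets $F\subseteq G$ directed by inclusion, converge to $X$ in the column metric; equivalently, that the family $(L_gX_g)_{g\in G}$ is column bounded and its tails $(L_gX_g)_{g\in G\setminus F}$ vanish in column norm as $F$ exhausts $G$. Column boundedness is immediate from the displayed identity, since $\|(L_gX_g)_{g\in G}\|_c^2=\|(X^*X)_e\|\le\|X\|^2<\infty$, so the whole content is the vanishing of the tails. I would stress at the outset that this is genuinely more than boundedness: for an arbitrary column-bounded family $(a_g)$ the tail norms $\|\sum_{g\notin F}a_g^*a_g\|$ need not tend to $0$, so a structural input is required.

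The device I would use is the observation that, on all of $\cc$, the column norm of the coefficient family coincides with $\|\cdot\|_\pi$. For any $Y\sim\sum_g L_gY_g$ in $\cc$, unitarity of the $L_g$ gives $\|(L_gY_g)_{g\in G}\|_c^2=\|\sum_g Y_g^*L_g^*L_gY_g\|=\|\sum_g Y_g^*Y_g\|$, and by the identity recalled just before the proposition this equals $\|(Y^*Y)_e\|=\|\pi(Y^*Y)\|=\|Y\|_\pi^2$. Hence $\|Y\|_\pi=\|(L_gY_g)_{g\in G}\|_c$ for every $Y\in\cc$.

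I would then apply this to the remainders. For a finite $F$ the partial sum $S_F$ lies in $\cc$ and, by linearity of the passage to Fourier coefficients, $X-S_F$ carries the truncated coefficient family $(X_g)_{g\in G\setminus F}$. Therefore $\|X-S_F\|_\pi^2=\|((X-S_F)^*(X-S_F))_e\|=\|\sum_{g\notin F}X_g^*X_g\|=\|(L_gX_g)_{g\in G\setminus F}\|_c^2$, so the tail column norms are exactly the $\|\cdot\|_\pi$-distances from $X$ to its partial sums. Finally I would invoke Proposition 2.2 of \cite{Ch}, by which the Fourier series of $X$ converges to $X$ in $\|\cdot\|_\pi$, i.e.\ $\|X-S_F\|_\pi\to0$ along the net of finite subsets; combined with the previous identity this forces $\|(L_gX_g)_{g\in G\setminus F}\|_c\to0$, which is precisely convergence in the column norm.

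The only step needing care, and the one I regard as the main (if minor) obstacle, is the bookkeeping that $X-S_F$ really has coefficient family $(X_g)_{g\in G\setminus F}$, so that the stated norm identity may be transferred from $X$ to each remainder. The essential fact that the tails vanish rather than merely stay bounded is not available from column boundedness alone; it is imported wholesale from the $\|\cdot\|_\pi$-convergence of the Fourier series proved in \cite{Ch}, and once that is granted the proposition is a formal consequence of the identification $\|\cdot\|_\pi=\|(L_g(\cdot)_g)\|_c$.
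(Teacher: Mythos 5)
Your proposal is correct and is precisely the argument the paper leaves implicit: the proposition is stated as an immediate consequence (``In particular \ldots'') of the two facts recalled just before it, namely the $\|\cdot\|_\pi$-convergence of the Fourier series from Proposition 2.2 of \cite{Ch} and the displayed identity $\|(L_gX_g)_{(g\in G)}\|_c^2 = \|\pi(X^*X)\| = \|(X^*X)_e\|$ identifying the column norm of the coefficient family with the $\pi$-norm. Your only addition is to spell out the bookkeeping that the remainder $X-S_F$ has coefficient family $(X_g)_{g\in G\setminus F}$, which is exactly the ``simple computation'' the paper takes for granted.
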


We will use the $\pi-$norm or column norm to estimate the operator norm in the computations to come, so we need the following proposition. It  may be known to several people, but may be in a slightly different setting. We are not aware of an explicit formulation as the one we present, but the results of \cite{PSS} have a similar flavour. We do also think that people who prefer to look at completely positive mappings as correspondences or  operator bimodules may know the result, but still we are missing a reference. 

\begin{proposition} \label{cpnorm}
Let $\cb$ be a C*-algebra, $H$ a Hilbert space and $\pi :\cb \to B(H)$ a completely positive and faithful mapping then
\begin{equation}
\notag \forall b \in \cb: \|b\| = \sqrt{ \sup\{\|\pi(x^*b^*bx)\|\, : \, \|\pi(x^*x)\| \leq 1\}}.
\end{equation}
\end{proposition}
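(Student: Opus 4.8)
The plan is to establish the two inequalities $\sup\{\|\pi(x^*b^*bx)\| : \|\pi(x^*x)\|\le 1\} \le \|b\|^2$ and $\sup\{\dots\} \ge \|b\|^2$ separately, and then take square roots. Throughout I will use only that $\pi$, being completely positive, is in particular positive, hence order preserving on self-adjoint elements and bounded, together with faithfulness. It is convenient to record that $\|\pi(x^*b^*bx)\| = \|\pi((bx)^*(bx))\|$, so the right-hand side measures how much left multiplication by $b$ can expand the quantity $x \mapsto \|\pi(x^*x)\|^{1/2}$. The case $b=0$ is trivial, so I assume $b \ne 0$.

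For the upper bound I would argue purely by positivity. Working in the unitization $\widetilde{\cb}$ one has $b^*b \le \|b\|^2\, 1$, so $\|b\|^2\, 1 - b^*b = d^*d$ with $d := (\|b\|^2\, 1 - b^*b)^{\frac12}$; since $\cb$ is an ideal in $\widetilde{\cb}$ the element $dx$ lies in $\cb$, and $\|b\|^2 x^*x - x^*b^*bx = (dx)^*(dx) \ge 0$ in $\cb$. Applying the positive map $\pi$ gives $\pi(x^*b^*bx) \le \|b\|^2\,\pi(x^*x)$, whence $\|\pi(x^*b^*bx)\| \le \|b\|^2\,\|\pi(x^*x)\| \le \|b\|^2$ for every admissible $x$. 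This already yields $\sqrt{\sup\{\dots\}} \le \|b\|$.

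The substance of the proposition is the reverse inequality, and this is the step I expect to be the main obstacle: one must exhibit test elements $x$ that nearly saturate the norm even though $\cb$ need not contain spectral projections. I would overcome this with the continuous functional calculus applied to the positive element $a := b^*b$. Put $M := \|a\| = \|b\|^2$, and recall that $M \in \sigma(a)$ because $a \ge 0$. Given $0 < \delta < M$, choose a continuous $g$ with $0 \le g \le 1$, $g \equiv 0$ on $[0, M-\delta]$ and $g(M) = 1$, and set $x := g(a)$. On the support of $g$ one has $a \ge (M-\delta)\,1$, so $a\,g(a)^2 \ge (M-\delta)\,g(a)^2$; applying $\pi$ and comparing norms of these positive operators gives $\|\pi(x^*b^*bx)\| = \|\pi(a\,g(a)^2)\| \ge (M-\delta)\,\|\pi(g(a)^2)\| = (M-\delta)\,\|\pi(x^*x)\|$.

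Here faithfulness enters decisively: since $g(M) = 1$ and $M \in \sigma(a)$ we have $g(a) \ne 0$, so $\pi(x^*x) = \pi(g(a)^*g(a)) \ne 0$ and the division is legitimate. Replacing $x$ by $x/\|\pi(x^*x)\|^{\frac12}$ produces an admissible element with $\|\pi(x^*x)\| = 1$ and $\|\pi(x^*b^*bx)\| \ge M - \delta$. Letting $\delta \to 0$ shows the supremum is at least $\|b\|^2$; combining this with the upper bound and taking square roots completes the proof. I note that complete positivity is stronger than what the argument requires, positivity and faithfulness sufficing.
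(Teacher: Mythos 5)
Your proof is correct, and it takes a genuinely different route from the paper's. The paper does not prove the two inequalities separately at all: it dilates $\pi$ via Stinespring's theorem, writing $\pi(\cdot)=C^*\rho(\cdot)C$ with $\rho$ a faithful representation, shows that $n(b):=\sqrt{\sup\{\|\pi(x^*b^*bx)\|\,:\,\|\pi(x^*x)\|\le 1\}}$ is a submultiplicative norm dominated by $\|\cdot\|$ and satisfying the C*-identity $n(b^*b)=n(b)^2$, and then concludes $n(b)=\|b\|$ from the rigidity of C*-norms: the inclusion of $\cb$ into the completion of $(\cb,n)$ is a contractive injective *-homomorphism between C*-algebras, hence an isometry. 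You instead argue by hand: the upper bound from positivity alone (via $x^*b^*bx\le\|b\|^2\,x^*x$, using the unitization), and the lower bound by exhibiting near-optimal test elements $x=g(b^*b)$ with $g$ a bump function concentrated near the top of the spectrum, faithfulness entering exactly once to justify the normalization. Your route is more elementary (no Stinespring, no completion, no automatic-isometry theorem), it is quantitative in that it displays the near-maximizers, and your closing remark is a genuine strengthening: positivity plus faithfulness suffices, whereas the paper's argument truly needs complete positivity since Stinespring is invoked at the outset. What the paper's softer argument buys in exchange is the structural insight that the right-hand side is itself a C*-norm on $\cb$, which explains \emph{why} the identity must hold without any spectral computation, and which is the form in which the proposition is exploited later in the paper ($\pi$ as a norming device for the reduced crossed product).
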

\begin{proof}
We may suppose that $\cb $ is a subalgebra of $B(K)$ for some Hilbert space $K,$ then since $\pi$ is completely positive and faithful there exists  by Stinespring's result \cite{St}  a faithful representation $\rho$ of $\cb$ on $H$ and a bounded operator $C$ in $B(K, H)$ such that $$ \forall b \in \cb: \quad \pi(b) = C^*\rho(b)C.$$ We may define a semi norm, say $n,$ on  $\cb $ by 
\begin{equation} \label{n(b)}  \forall b \in \cb: \quad n(b) := \sup\{ \|\rho(bx)C\| : \|\rho(x)C\| \leq 1 \}.
\end{equation}
Since $\|C^*\rho(y^*y)C\| = \|\rho(y)C\|^2 $ for any $y$ in $\cb$ and $\pi$ is faithful we get that $n$ is a norm and that
 \begin{equation}
\notag \forall b \in \cb: n(b) = \sqrt{ \sup\{\|\pi(x^*b^*bx)\|\, : \, \|\pi(x^*x)\| \leq 1\}}.
\end{equation}
 On the other hand the definition (\ref{n(b)}) implies that for $b, d$ in $\cb$ we have $n(bd) \leq n(b)n(d)$ so $n$ is an algebra norm and  $n(b) \leq \|b\|.$
 For any pair $b, x $ in $\cb$ with $\|\rho(x)C\|^2 =\|\pi(x^*x)\| \leq  1$ we have 
 \begin{align*} n(b^*b) &\geq \|\rho(b^*bx)C\| \geq \|C^*\rho(x^*b^*bx)C\|  = \|\rho(bx)C\|^2 , \text{ so } \\ n(b^*) n(b) & \geq n(b^*b)  \geq n(b)^2. \end{align*}
From here it follows that $n(b) = n(b^*)$ and then $n(b^*b) = n(b)^2,$ and the completion say $\hat \cb$  of $\cb$ with respect to the C*-norm $n$ becomes a C*-algebra such that  the  inclusion of $\cb $ in $ \hat \cb$ is a contractive faithful *-homomorphism and hence an isometry. The proposition follows. 
\end{proof}

In Haagerup's  and Jolissaint's articles, \cite{Ha}, \cite{Jo} they use the symbol $*$ to denote the operator product in a group algebra, since this is really a convolution product. In  our article on crossed product C*-algebras \cite{Ch} the operator product is an invisible dot and the $*$ is used to denote the Hadamard product, which in the notation from above takes the form $$\big(\sum_g L_gX_g\big)*\big(\sum_h L_hY_h\big) := \sum_f L_fX_fY_f.$$  We will use this convention here, too.

\section{ The property (J)} \label{PJ}

The basic ideas in the arguments to come are taken from Haagerup's article, and in the setting of a non commutative free group it is clear that for two group elements $x, y$ with reduced words  $x = x_1 \dots x_k$ and $y = y_1 \dots y_l$ the number of cancellations, say $p,$ needed to spell to $xy$ gives the spelling of $xy$ directly as $xy = x_1 \dots x_{(k- p)} y_{(p+1)} \dots y_l.$ In Jolissaint's article he uses this observation in a very clever original way, and he shows that this idea may be generalized to work {\em up to a controllable error }  in some groups of isometries on a Riemannian manifold with bounded strictly negative sectional curvature. Then de la Harpe showed that Jolissaint's method of dealing with cancellations works in any finitely generated discrete group which is hyperbolic, as defined by Gromov, \cite{Gr} and \cite{GH}. Here we will instead take these results as the basis for the definition of a property we have named (J).   

We will now define the setting in which we will use Haagerup's, Jolissaint's and de la Harpe's ideas. We define the cancellation number in a general group with a length function as follows. 
\begin{definition}
Let $G$ be a group with a length function $g \to |g|.$ For  $g,h$  in $G$ the cancellation number $c(g,h)$ of the pair is defined as the non negative integer $p(g,h) $ which satisfies  $$2p(g,h) \leq  |g| + |h| - |gh|< 2p(g,h) +2.$$
\end{definition} 
It follows from the properties of a length function that $ 0 \leq p(g,h)  \leq \min\{ |g|, |h|\},$  and the cancellation number divides the cartesian product $ G \times G$ into a sequence of disjoint subsets $(P_p)_{( p \in \bn_0)} $  defined by  $P_p:= \{(g,h) \in G \times G\, : \, p(g,h) = p.\}.$ 

Following Jolissaint we define certain subsets of a group $G$ with a length function $|g|$ as follows
\begin{definition}
\begin{itemize}
\item[]
\item[(i)] $ \forall r  \geq 0: 
\quad \quad \quad \quad \, \, \,\, \, B_r := \{g \in G\,:\, |g| \leq r\},$
\item[(ii)] $ \forall k  \geq 0: \,
\quad \quad \quad \, \,  \, \, \quad C_k := \{g \in G\,:\, k-1 < |g| \leq k\},$
 \item[(iii)] $ \forall   k \geq 0 \, \forall \a \geq 0 :  \quad C_{k, \a} := \{g \in G\,:\, k - \a \leq |g| \leq k + \a \},$ 
\end{itemize}
\end{definition}

We can now collect  the sufficient conditions, we have have dragged out of \cite{Jo}, into a property we name (J). We will not focus on which groups that may satisfy the property (J), but we think that the survey article on rapid decay \cite{CI} by Chatterji will show that many groups do have property (J).   On the other hand we will sketch arguments which show that Jolissaint's and de la Harpe's examples do have property (J). It is easy to see that free non abelian groups do have property (J) with the extra  very nice properties that the constants of the definition satisfy    $\a=\b= \g= 0$ and $N=1.$  
 
\begin{definition} \label{RdDef}
Let $G$ be a discrete group with a length function $g \to |g|.$ The pair $(G, |\cdot|)$ has   property, (J) if
\begin{align} &\exists \a >0, \b>0, \g>0,  : \\ 
(i)\notag  \,  &  \forall g \in G, \forall 0 \leq s < |g| +1, \,  \exists u_{(g,s)}  \in C_{s, \a}\\ \notag (ii)  \,&  v_{(g,s)} := u_{(g,s)}^{-1} g \in C_{(|g|+1 -s), \b}  \\ (iii) \, & \notag
\forall p \in \bn_0,  \forall  (a, b) \in P_p\,     \exists\,  c(a,b)  \in C_{p, \g} \text{ s. t. } \\ \,  & \notag  a = u_{(ab,(|a|-p))} c(a,b), \, b = c(a,b)^{-1} v_{(ab, (|a|- p))}.\\ \label{Nsolut}
 & \forall \mu >0,  \nu >0 \, \exists N \in \bn: \\ \notag  &  \forall b \in G \forall \, 0 \leq p \leq |b| \,:\, \,  |\{ (c, v) \in C_{p, \mu} \times C_{(|b| - p), \nu} :  c^{-1}v = b\}| \leq N.
\end{align}
\end{definition}

\begin{remark}
It is important for the following proofs in the next section to notice that the group element $u_{(g,s)} $   is  determined uniquely by $g$ and $s,$ and then $v_{(g,s)} = u_{(g,s)}^{-1}g$ is also determined by $g$ and $s$ only. 

The interesting thing about the  factors  $c(a,b)$ is that they always approximately satisfy $|c(a,b)| = p.$  
\end{remark}

In Jolissaint's  proof the group element $u_{(g,s)} $ is chosen geometrically, as we sketch now.  On the geodesic which connects a point $m$ in the manifold with its image $g(m),$ one  chooses the point $n_s$  which has the distance $s$ to $g(m) . $ Then $u_{(g,s)}$  is chosen such   that the distance between $n_s$ and $u_{(g,s)}^{-1} \big(g(m)\big) $ is minimal among the distances between $n_s$ and the set $\{u(m)\,:
\, u \in G\}.$ We will not continue to quote Jolissaints proof, but show that  a group $G,$ which is hyperbolic with respect to a given word length, has the property (J). The proof follows that of de la Harpe \cite{PdH}, but in order to explicitly establish the property (J) from the definition above, we repeat part of it here. We will use the following notation. For a real number $s,$ the expression $\lfloor s \rfloor $ means the largest integer dominated by $s.$

\begin{lemma}
Let $G$ be a finitely generated discrete group which is hyperbolic with respect to  word length. 
Then $G$ has the property (J).
\end{lemma}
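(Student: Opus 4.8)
The plan is to realize every piece of data in property~(J) geometrically inside the Cayley graph of $G$ for its fixed finite generating set, using Gromov's thin-triangle inequality. Fix $\delta$ so that every geodesic triangle in the Cayley graph is $\delta$-thin, and fix once and for all, for each $g \in G$, a single geodesic word $\eta_g$ from $e$ to $g$; for an integer $0 \le m \le |g|$ let $w_m(g)$ denote the vertex of $\eta_g$ at distance $m$ from $e$. I would then \emph{define} $u_{(g,s)} := w_{\lfloor s \rfloor}(g)$ and $v_{(g,s)} := u_{(g,s)}^{-1} g$. Since $\eta_g$ depends on $g$ alone, $u_{(g,s)}$ and $v_{(g,s)}$ become functions of $g$ and $s$ only, as the Remark demands. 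With this choice $|u_{(g,s)}| = \lfloor s \rfloor$ and, because $w_{\lfloor s\rfloor}(g)$ lies on a geodesic ending at $g$, $|v_{(g,s)}| = |g| - \lfloor s \rfloor$; a one-line estimate comparing these with $s$ and $|g|+1-s$ then gives $u_{(g,s)} \in C_{s,1}$ and $v_{(g,s)} \in C_{|g|+1-s,\,1}$, so clauses~(i) and~(ii) hold with $\alpha = \beta = 1$.

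For clause~(iii), fix $(a,b) \in P_p$, put $s = |a| - p$, $u = u_{(ab,s)}$, $v = v_{(ab,s)}$, and set $c(a,b) := u^{-1} a$. The two required identities $a = u\,c(a,b)$ and $b = c(a,b)^{-1} v$ are then automatic, so the whole content of~(iii) is the single length estimate $c(a,b) \in C_{p,\gamma}$, i.e. $d(u,a) = |u^{-1}a| = p \pm \gamma$. First I would check that $s$ is a legal parameter: since the triangle inequality $|a| \le |ab| + |b|$ gives $|a| - |ab| \le \tfrac12(|a|+|b|-|ab|)$ and the left side is an integer, it is $\le p = \lfloor \tfrac12(|a|+|b|-|ab|)\rfloor$; together with $p \le |a|$ this yields $0 \le s \le |ab|$, so $w_s(ab)$ exists. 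The geometric heart is the triangle with vertices $e$, $a$, $ab$ and side lengths $|a|$, $d(a,ab) = |b|$, $|ab|$. Up to the additive slack coming from the defining inequality for $p$, its Gromov products are $|a|-p$ at $e$, $|b|-p$ at $ab$, and $p$ at $a$. Hence the internal point of the side $[e,ab]$ sits at distance $|a|-p$ from $e$, which is exactly where $u = w_{|a|-p}(ab)$ lies, while the internal point of the side $[e,a]$ sits at distance $p$ from $a$; as these two internal points lie on the two sides issuing from the common vertex $e$, the tripod comparison places them within $4\delta$ of one another. The triangle inequality then gives $d(u,a) = p + O(\delta)$, which is clause~(iii) with $\gamma$ depending only on $\delta$.

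For the counting bound~(\ref{Nsolut}), fix $\mu,\nu>0$ and suppose $b = c^{-1}v$ with $c \in C_{p,\mu}$ and $v \in C_{|b|-p,\nu}$. Then $|c^{-1}| + d(c^{-1},b) = |c| + |cb| = |c| + |v| \le |b| + (\mu+\nu)$, so $c^{-1}$ is a $(\mu+\nu)$-almost-geodesic point between $e$ and $b$; by $\delta$-thinness (stability of such almost geodesics) it lies within some $R = R(\delta,\mu,\nu)$ of the fixed geodesic $\eta_b$, and, since $|c^{-1}| = |c| \in [p-\mu, p+\mu]$, within $R$ of its vertex $w_p(b)$. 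Thus all admissible $c^{-1}$ lie in the ball of radius $R$ about $w_p(b)$, a translate of $B_R$, so there are at most $|B_R|$ of them; as $B_R$ is finite and independent of $b$ and $p$, the choice $N := |B_R|$ works.

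The main obstacle throughout is the bookkeeping rather than the ideas: one must pin the three Gromov products and the tripod comparison to \emph{uniform} additive constants, and absorb the off-by-one effects of the floor function and of the defining inequality $2p \le |a|+|b|-|ab| < 2p+2$, so that the final $\alpha,\beta,\gamma$ and $N$ depend only on $\delta$ and the generating set and not on the particular $a$, $b$ or $g$.
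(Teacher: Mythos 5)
Your proposal is correct and takes essentially the same route as the paper: the same choice of $u_{(g,s)}$ as prefixes of fixed geodesic words, the same definitions $v_{(g,s)} = u_{(g,s)}^{-1}g$ and $c(a,b) := u_{(ab,|a|-p)}^{-1}a$, and the same two key estimates. The only difference is that where the paper cites de la Harpe's lemma (bottom of p.~771 of his note) for the length bound $p \le |c(a,b)| \le p+M$ and for the counting bound $N$, you prove both directly from $\delta$-thinness (tripod comparison of Gromov products, and stability of almost-geodesic points plus finiteness of balls), which makes the argument self-contained.
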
 

\begin{proof}
For a group element $g$ written in reduced form as $g = g_1 \dots g_{|g|}$ and a real $s, \, 0 \leq s < |g| +1$ we define $$u_{(g,s)} := \begin{cases} e \quad \quad \quad \,\, \,\text{ if } 0  \leq s < 1 \\ 
g_1 \dots g_{\lfloor s \rfloor } \text{ if } 1 \leq s < |g|+1, \end{cases} $$ and we find that $u_{(g,s)} \in C_{s,1} $ so $\a =1$ may be used.
Similarly we find $$ v_{(g,s)} = \begin{cases}
 g_{ ( \lfloor s \rfloor + 1)} \dots g_{|g|}    \text{ if } \, \,0 \leq s < |g| 
\\ e \quad \quad \quad \,\, \quad \quad \text{ if } |g| \leq s < g+1, \end{cases}$$ and we get $v_{(g,s)} \in C_{(|g| +1 -s),1},$ so $\b = 1 $ is possible.

Given a non negative integer  $p$ and a pair of group elements $(a,b) \in P_p$ with $ab = g,$ then for $k:= |a|, $ $l:= |b|$ there exists $c \in \{0, 1\}$ such that $|g| = k+l-2p -c.$ Then for $u_{(g,k-p)} $ we may apply the  lemma at the bottom of page 771 in \cite{PdH}, to see that there exists an $M \geq 0,$ independent of $k, l, p,  c$ such that for $c(a,b) := u_{(g, k-p)}^{-1}a$ the following  inequalities hold \begin{equation}
p \leq |c(a,b)| \leq p + M,
\end{equation}
  so $c(a,b) \in C_{p , M }.$  

In order to establish the property (\ref{Nsolut}) we remark, that in our case we have $ v_{(g,(k-p))}  \in C_{(|g| + 1 - k +p),1} = C_{(l-p -c +1), 1},$ which means $$ c(a,b) \in C_{p, M}, \, \, v_{g, (k-p)} \in C_{(|b|-p), 2} \text{ and } c(a,b)v_{(g, (k-p)) } = b. $$ The result then follows from item (ii) in the lemma of \cite{PdH}.

\end{proof}

\section{Rapid decay} 
The most basic example of the phenomenon named {\em rapid decay } by Paul Jolissaint in \cite{Jo} is presented quite early in many courses on Fourier series. The example tells, that if  $f(t)$ is a differentiable complex \newline $2\pi-$periodic function on $\br,$ then its Fourier series is uniformly convergent.  This is proven via the following argument based on the Cauchy-Schwarz inequality, as follows.   
 Let $f(t)$ have the Fourier series $f(t) \sim \sum_\bz c_ne^{int},$ then the derivative $f^\prime (t)$ has the Fourier series   $f^\prime(t) \sim  \\ \sum_\bz in  c_n e^{int},$ and the sequence of complex numbers $(nc_n)_{(n \in \bz)}$ is in $\ell^2(\bz).$  Since for $n \neq 0$ we may write $c_n = \frac{1}{n}(nc_n),$ we get that  the sequence  $(c_n)_{(n \in \bz)} $ is in $\ell^1(\bz)$ with $\|(c_n)\|_1 \leq  |c_0| + \frac{\pi}{\sqrt{3}}\|(nc_n)\|_2.$ If we translate this to the setting of the discrete group $\bz$ equipped with the natural length
  function  $|n|, $ we find that the group algebra $C^*_r(\bz)$ may be identified with the complex  continuous $2\pi-$periodic functions on the real axis and an  element $x$  in the group algebra which correspond to a  differentiable function has a presentation as a uniformly convergent sum $x = \sum_{(n \in \bz)} x_n\l_n.$ This example may be generalized to the setting of a discrete group with a length function when the content of the example is formulated as in the following proposition. 
   \begin{proposition} Let $x \sim \sum_\bz x_n\l_n $ be an operator in $C^*_r(\bz).$ If the sequence $\big((1+ |n|)x_n\big)_{( n \in \bz)} $ is in $\ell^2(\bz),$ then the series is uniformly convergent and \newline $\|x\| \leq \big(\frac{\pi^2}{3}-1\big)^{\frac{1}{2}}\big( \sum_{n \in \bz} |x_n|^2(1 + |n|)^2\big)^{\frac{1}{2}}. $\end{proposition}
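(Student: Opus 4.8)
The plan is to bound the operator norm by the $\ell^1$ norm of the Fourier coefficients and then convert this into the weighted $\ell^2$ estimate by Cauchy--Schwarz. Since each $\l_n$ is a unitary in $C^*_r(\bz)$, the triangle inequality gives at once $\|x\| \leq \sum_{n\in\bz}|x_n|$ whenever the right-hand side is finite; this replaces the sup-norm estimate used in the classical Fourier argument and works without first passing to a function algebra. So the whole statement reduces to showing that $(x_n)\in\ell^1(\bz)$ together with the claimed bound on $\sum|x_n|$.

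Next I would introduce the weights $(1+|n|)^{-1}$ and write $|x_n| = (1+|n|)^{-1}\cdot(1+|n|)|x_n|$, so that Cauchy--Schwarz on $\ell^2(\bz)$ yields
$$\sum_{n\in\bz}|x_n| \leq \Big(\sum_{n\in\bz}\frac{1}{(1+|n|)^2}\Big)^{\frac12}\Big(\sum_{n\in\bz}(1+|n|)^2|x_n|^2\Big)^{\frac12}.$$
The hypothesis that $\big((1+|n|)x_n\big)$ lies in $\ell^2(\bz)$ makes the second factor finite, so $(x_n)\in\ell^1(\bz)$ as required, and the inequality above is already the desired estimate provided the first factor equals the stated constant.

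The only genuine computation is that first factor. Splitting off the $n=0$ term and pairing $n=\pm k$ gives $\sum_{n\in\bz}(1+|n|)^{-2} = 1 + 2\sum_{k=1}^\infty (1+k)^{-2} = 1 + 2\sum_{j=2}^\infty j^{-2}$, and the Basel identity $\sum_{j\geq1}j^{-2} = \pi^2/6$ turns this into $1 + 2\big(\tfrac{\pi^2}{6} - 1\big) = \tfrac{\pi^2}{3} - 1$. Taking square roots produces exactly the constant $\big(\tfrac{\pi^2}{3}-1\big)^{\frac12}$ appearing in the statement.

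Finally, for uniform convergence I would invoke the Gelfand identification of $C^*_r(\bz)$ with the continuous functions on the circle $\bt$, under which $\l_n$ corresponds to $e^{int}$ and norm convergence in the algebra coincides with uniform convergence of the associated functions. Since $(x_n)\in\ell^1(\bz)$, the symmetric partial sums form a Cauchy sequence in operator norm --- successive differences are dominated by a tail of the convergent series $\sum|x_n|$ --- so the Fourier series converges in norm, hence uniformly. There is no serious obstacle here: the entire content is the elementary norm-to-$\ell^1$ reduction and the evaluation of the Basel-type sum that pins down the sharp constant.
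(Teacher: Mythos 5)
Your proposal is correct and follows essentially the same route the paper takes: the weighted Cauchy--Schwarz step $|x_n|=(1+|n|)^{-1}\cdot(1+|n|)|x_n|$ giving an $\ell^1$ bound, the evaluation $\sum_{n\in\bz}(1+|n|)^{-2}=\tfrac{\pi^2}{3}-1$ for the constant, and the identification of $C^*_r(\bz)$ with the continuous $2\pi$-periodic functions to convert norm convergence into uniform convergence. The only cosmetic difference is that you bound $\|x\|\leq\sum_n|x_n|$ directly from unitarity of the $\l_n$ rather than via the sup norm on the circle, which is the same estimate in disguise.
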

   
We recall from Chatterji's survey article  \cite{CI},   Definition 2.9, in a modified form.
   
  \begin{definition}
Let $G$ be a discrete group with a length function $|g|,$ then  $G$ has the {\em rapid decay property, }with respect to $|g| $  if there exists positive constants $C, s$  such that for any operator $x = \sum_g x_g \l_g$  in the reduced group C*-algebra and with finite support  we have $$\|x\| \leq C\sqrt{\sum_{g \in G}|x_g|^2(1 +|g| )^{2s}}.  $$  
  \end{definition}
  
    It is immediate that this definition may be 
   extended to the setting of a reduced crossed product of a C*-algebra by a discrete group in several ways.
   We have played with  3 possibilities of definition, but only been able to obtain results for the 2 of them, which we define below. The third possibility is mentioned after the definition.    
  \begin{definition}
Let $G$ be a discrete group with a length function $g \to |g|,$ such that $ G$ acts on a C*-algebra $\ca$ via a group of *-automorphisms $\a_g.$  
The reduced crossed product $\cc:= C^*_r( \ca \rtimes_\a G)$  has  {\em rapid decay } of {\em operator type  } or {\em scalar type} if 
there exist positive constants $C, s$ such that for any operator $X = \sum L_gX_g$ with finite support:
\begin{align*} 
&\text{ operator type:}\\ &\|X\| \leq C\| \sum_{g \in G} (1 + |g|)^{2s} (L_gX_gX_g^*L_g^*+ X_g^*X_g)\|^{\frac{1}{2}}\\
&\text{ scalar type:}\\  &\|X\| \leq C\bigg( \sum_{g \in G} (1 + |g|)^{2s} \|X_g\|^2\bigg)^{\frac{1}{2}}
\end{align*}
If one of the properties above holds for any C*-algebra $\ca$ carrying an action $\a_g$ of 
$G$ we say that $G$ possesses {\em  complete rapid decay } of respectively  operator type and  scalar type. 
\end{definition}  

In the very first version of the article we thought that we could prove that free non abelian groups do have a sort of mixed rapid decay defined as 

\begin{align*} 
&\text{ mixed type:}\\ &\|X\| \leq C\bigg( \sum_{k=0}^\infty(1 + |k|)^{2s} \|\sum_{g \in C_k} \big(L_gX_gX_g^*L_g^*+ X_g^*X_g\big)\|\bigg)^{\frac{1}{2}}.
\end{align*} 

 Unfortunately we were wrong, but it may still be that some discrete groups with non polynomial growth satisfy such a condition, and that would be very helpful in the study of multipliers of the form $M_\f$, as it follows from the proof of Proposition \ref{Mult}.

The {\em complete operator type of rapid decay } may be established for  finitely generated groups with polynomial growth by a simple imitation of the proofs from the group algebra case. For other groups it seems impossible to us to establish the operator type of rapid decay outside the reduced group algebra case. 
We establish the complete {\em scalar  type of rapid decay } for discrete groups which possess  property (J) with respect to a length function.

  \section{complete operator rapid decay in a reduced crossed product of a C*-algebra by a finitely generated group with polynomial growth}
  
Here we modify some of Jolissaint's results from his section 3.1 of \cite{Jo}  to cover our situation.    
  
  \begin{theorem}
  Let $\ca$ be a C*-algebra, $G$ a finitely generated discrete group with polynomial growth which has an action $\a_g$ on $\ca$  as a group of *-automorphisms. There exists positive reals $M, s$ such that for any  finitely supported operator $X= \sum L_gX_g $ in $\cc:= C^*_r(\ca\rtimes_\a G):$
   \begin{align}\notag
   \|X\|& \leq M \|  \sum_{g \in G}(1+|g|)^{s+2}L_gX_gX^*_gL^*_g\|^{\frac{1}{2}}\\ 
   \notag\|X\| & \leq M \|  \sum_{g \in G}(1+|g|)^{s+2}X^*_gX_g\|^{\frac{1}{2}}.
   \end{align}
  \end{theorem}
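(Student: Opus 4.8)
The plan is to reduce the operator-norm estimate for $X$ to the column-norm estimate supplied by Proposition \ref{cpnorm}, and then to run a growth argument on the ball-by-ball decomposition of $X$. Since $\pi$ is completely positive and faithful on $\cc$, Proposition \ref{cpnorm} tells us that $\|X\|^2 = \sup\{\|\pi(Y^*X^*XY)\| : \|\pi(Y^*Y)\|\le 1\}$; equivalently, it suffices to bound $\|X\|$ by a multiple of the column norm of $X$ composed with a weighted version of $X$. Because the two asserted inequalities are symmetric under $X \mapsto X^*$ (which swaps row and column norms via the identities (2.2)), I would prove the second one, involving $\sum_g (1+|g|)^{s+2} X_g^*X_g$, and obtain the first by applying it to $X^*$.

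First I would split $X$ according to the spheres $C_k$, writing $X = \sum_{k\ge 0} X^{(k)}$ where $X^{(k)} := \sum_{g \in C_k} L_g X_g$, so that $\|X\| \le \sum_k \|X^{(k)}\|$. The key point is that each $X^{(k)}$ is supported on a single sphere, so I would bound $\|X^{(k)}\|$ by its column norm up to a factor counting how many group elements sit in $C_k$: using polynomial growth, $|C_k| \le Q(1+k)^{s}$ for some polynomial degree $s$ and constant $Q$. Concretely, for a sphere-supported element the operator norm is controlled by $\sqrt{|C_k|}$ times the column norm $\|X^{(k)}\|_c = \|\sum_{g \in C_k} X_g^*X_g\|^{1/2}$, which is the crossed-product analogue of Haagerup's sphere estimate. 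Thus $\|X^{(k)}\| \le \sqrt{Q}\,(1+k)^{s/2}\,\|\sum_{g\in C_k} X_g^*X_g\|^{1/2}$.

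Next I would insert a weight and apply Cauchy--Schwarz over $k$. Writing $\|X^{(k)}\| \le \sqrt{Q}\,(1+k)^{-1} \cdot (1+k)^{(s/2)+1}\|\sum_{g\in C_k}X_g^*X_g\|^{1/2}$ and summing, the factors $(1+k)^{-1}$ are not summable, so instead I would pair $(1+k)^{-1-\e}$ with $(1+k)^{(s/2)+1+\e}$ for a small $\e>0$; the first sequence lies in $\ell^2(\bn_0)$, and Cauchy--Schwarz then gives $\|X\| \le \sqrt{Q}\,\big(\sum_k (1+k)^{-2-2\e}\big)^{1/2}\big(\sum_k (1+k)^{s+2+2\e}\|\sum_{g\in C_k}X_g^*X_g\|\big)^{1/2}$. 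Finally, since the $C_k$ are disjoint and the operators $\sum_{g\in C_k}X_g^*X_g$ are positive, $\sum_k (1+k)^{s+2+2\e}\|\sum_{g\in C_k}X_g^*X_g\| \le \|\sum_{g\in G}(1+|g|)^{s+2+2\e} X_g^*X_g\|$ by positivity of the summands and disjointness of supports (up to absorbing $2\e$ into the exponent by adjusting $s$), which yields the desired inequality with $M = \sqrt{Q}\,(\sum_k(1+k)^{-2-2\e})^{1/2}$.

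The step I expect to be the main obstacle is the single-sphere operator-norm bound $\|X^{(k)}\| \le \sqrt{|C_k|}\,\|X^{(k)}\|_c$ in the \emph{crossed-product} setting. In the reduced group C*-algebra this is Haagerup's classical observation, but here the coefficients $X_g$ are operators in $\ca$ rather than scalars, so the argument must be recast using the operator Cauchy--Schwarz inequality (Proposition \ref{csOpIneq}) and the column/row norm identities (2.2) rather than a scalar $\ell^2$ computation. I would verify this by representing $X^{(k)}$ as a finite sum and estimating $\|\pi((X^{(k)})^*(X^{(k)})Y)\|$ against $\|\pi(Y^*Y)\|$, distributing the $|C_k|$ cross terms and controlling each by the column norm via Proposition \ref{csOpIneq}(i); the cardinality factor $|C_k|$ then enters through the number of such cross terms, exactly as in the commutative model.
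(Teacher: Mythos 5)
Your argument is sound up to and including the Cauchy--Schwarz over $k$: the per-sphere bound $\|X^{(k)}\| \leq \sqrt{|C_k|}\,\|\sum_{g\in C_k}X_g^*X_g\|^{1/2}$ is correct (it is Proposition \ref{csOpIneq}(i) applied to $X^{(k)}=\sum_{g\in C_k}(L_g^*)^*X_g$, using $\sum_{g\in C_k}L_gL_g^*=|C_k|I$), and so is the reduction of the first inequality to the second via $X\mapsto X^*$. But the final step is false. You claim
\begin{equation*}
\sum_{k} (1+k)^{s+2+2\e}\Big\|\sum_{g\in C_k}X_g^*X_g\Big\| \;\leq\; \Big\|\sum_{g\in G}(1+|g|)^{s+2+2\e}X_g^*X_g\Big\|,
\end{equation*}
i.e.\ $\sum_k w_k\|A_k\|\leq\|\sum_k w_kA_k\|$ for the positive operators $A_k:=\sum_{g\in C_k}X_g^*X_g$. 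The correct inequality between these two quantities is the \emph{reverse} one (triangle inequality); positivity and disjointness of supports only give $w_k\|A_k\|\leq\|\sum_j w_jA_j\|$ for each single $k$, not for the sum. Concretely, take $\ca=B(H)$ with the trivial action, one element $g_k$ in each sphere, and $X_{g_k}=P_k$ with $P_1,P_2,\dots$ mutually orthogonal projections: then the left side is $\sum_k w_k$ while the right side is $\max_k w_k$. What your argument actually proves is the ``mixed type'' estimate $\|X\|\leq M\big(\sum_k(1+k)^{s+2+2\e}\|\sum_{g\in C_k}X_g^*X_g\|\big)^{1/2}$, which is genuinely weaker than the operator-type statement of the theorem; indeed the remark at the end of the introduction emphasizes exactly this point, that the norm of a sum of positive operators can be much smaller than the sum of their norms.

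The root cause is the very first step: the triangle inequality $\|X\|\leq\sum_k\|X^{(k)}\|$ forces you to extract a norm sphere by sphere, and once the norm has been split it cannot be reassembled inside a single norm. The paper never splits. It applies the operator Cauchy--Schwarz of Proposition \ref{csOpIneq}(i) once, globally, to the factorization $L_gX_g=\big(\tfrac{1}{(1+k)|C_k|^{1/2}}I\big)\cdot\big((1+k)|C_k|^{1/2}L_gX_g\big)$ for $g\in C_k$. The scalar column contributes $\big(\sum_k\sum_{g\in C_k}(1+k)^{-2}|C_k|^{-1}\big)^{1/2}=\pi/\sqrt{6}$, while the operator column is $\big\|\sum_k\sum_{g\in C_k}(1+k)^2|C_k|\,X_g^*X_g\big\|^{1/2}$: the cardinality $|C_k|$ enters as a scalar weight \emph{inside} one norm, where polynomial growth converts it to $C(1+k)^{s}$ and yields $\sqrt{C}\,\|\sum_g(1+|g|)^{s+2}X_g^*X_g\|^{1/2}$ up to a constant. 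If you want to salvage your outline, replace the sphere-by-sphere triangle inequality with this single global Cauchy--Schwarz; note also that your $\e$-trick is unnecessary even in your own scheme, since Cauchy--Schwarz only requires $\sum_k(1+k)^{-2}<\infty$.
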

\begin{proof}
The polynomial growth implies that there exists positive reals $C, s$ such that $|C_k | \leq C(1+k)^s, $ then the following manipulations are standard, and the proof follows from  Proposition \ref{csOpIneq} as follows 
 
\begin{align*}
X = &\sum_{k=0}^\infty \sum_{g \in C_k}  \frac{1}{(1+k)|C_k|^{\frac{1}{2}}}\big((1+k)|C_k|^{\frac{1}{2}}L_gX_g\big)\\ 
\leq &\sqrt{C} \sum_{k=0}^\infty \sum_{g \in C_k}  \frac{1}{(1+k)|C_k|^{\frac{1}{2}}}\big((1+k)^{(1 + \frac{s}{2})}{\frac{1}{2}}L_gX_g\big)\\ 
\|X\| \leq & \frac{\pi\sqrt{C}}{\sqrt{6}}
\|\sum_{k =0}^\infty \sum_{g \in C_k} (1+k)^{(2 +s) } X_g^*X_g \|^{\frac{1}{2}}\\
\leq &\frac{\pi\sqrt{C}}{\sqrt{6}}\sqrt{2} 
\|\sum_{g \in G} (1+|g|)^{(2 +s) } X_g^*X_g \|^{\frac{1}{2}}
\end{align*} so $M :=\frac{\pi\sqrt{2C}}{\sqrt{6}},$ may be used. The inequality involving $L_gX_gX_g^*L_g^*$ follows in the same way.
\end{proof}

\section{Complete scalar rapid decay for discrete groups with property (J) }

The basic result in this section is the proposition just below, and this is a combined extension of some of the first results in \cite{Ha}.
\begin{proposition}
Let $(G, |\cdot|) $ be a discrete group with a length function satisfying the property  (J).   There exists a positive constant $N$ such that for any action of $G$  as a group of *-automorphisms $\a_g$ on a C*-algebra $\ca,$ any non negative integer $k$ and  any  operator $X = \sum_{g \in G}L_gX_g$ in $\cc := C^*_r(\ca \rtimes_\a G)$  with finite support in $C_k:$
$$\|X\| \leq N(1+k)\sqrt{\sum \|X_g\|^2 }.  $$
\end{proposition}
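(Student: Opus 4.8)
The plan is to estimate $\|X\|$ by testing it against vectors in the regular covariant representation of $\cc$ on $\ell^2(G,H)$ (for $\ca\subseteq B(H)$ faithfully), where $X=\sum_{g\in C_k}L_gX_g$ acts by a formula of the form $(X\zeta)(x)=\sum_{g}\a_{x^{-1}g}(X_g)\,\zeta(g^{-1}x)$ (with the conventions of \cite{Ch}). Expanding the sesquilinear form and substituting $y=g^{-1}x$ gives $\langle X\zeta,\omega\rangle=\sum_{g\in C_k}\sum_{y\in G}\langle\a_{y^{-1}}(X_g)\zeta(y),\omega(gy)\rangle_H$, which is the exact operator-coefficient analogue of the scalar trilinear form $\sum f(g)\xi(h)\overline{\eta(gh)}$ estimated by Haagerup. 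The whole argument then consists in bounding this by $N(1+k)\bigl(\sum_g\|X_g\|^2\bigr)^{1/2}\|\zeta\|\,\|\omega\|$; equivalently one may run the same computation through Proposition \ref{cpnorm} and the column-norm identity $\|X\|_c^2=\|\sum_gX_g^*X_g\|$, but the concrete form is the most transparent.

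First I would split the double sum by cancellation number, $\langle X\zeta,\omega\rangle=\sum_{p=0}^{k}B_p$, where $B_p$ runs over $(g,y)\in P_p$. Since $p(g,y)\le|g|\le k$, only the values $p=0,\dots,k$ occur, and this is exactly where the factor $(1+k)$ enters: there are $k+1$ blocks, each bounded uniformly. For a fixed $p$, property (J)(iii) factors every relevant pair as $g=u\,c$, $y=c^{-1}v$, $gy=uv$, with $u=u_{(gy,|g|-p)}\in C_{(k-p),\a}$, $c=c(g,y)\in C_{p,\g}$, and $v=v_{(gy,|g|-p)}\in C_{(|y|-p),\b'}$ (the widths being $\a,\g,\b$ adjusted by a bounded amount). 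By the Remark, $u$ and $v$ depend only on the product $gy$ and on $p$, so $(g,y)\mapsto(u,c,v)$ is injective and I may reindex, enlarging the index set to the full product of shells (setting $X_{uc}=0$ for invalid triples) for an upper bound. Under this substitution $\a_{y^{-1}}=\a_{v^{-1}c}$, so $B_p=\sum_{u,v}\langle\sum_c\a_{v^{-1}c}(X_{uc})\,\zeta(c^{-1}v),\,\omega(uv)\rangle_H$.

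Next I would apply Cauchy--Schwarz twice. In the outer $(u,v)$ sum it gives $|B_p|\le\bigl(\sum_{u,v}\|\sum_c\a_{v^{-1}c}(X_{uc})\zeta(c^{-1}v)\|^2\bigr)^{1/2}\bigl(\sum_{u,v}\|\omega(uv)\|^2\bigr)^{1/2}$. For the inner norm, viewing $(\a_{v^{-1}c}(X_{uc}))_c$ as a row and $(\zeta(c^{-1}v))_c$ as a column and using that the $\a$'s are isometric, the operator Cauchy--Schwarz of Proposition \ref{csOpIneq} (or the elementary row/column bound) yields $\|\sum_c\a_{v^{-1}c}(X_{uc})\zeta(c^{-1}v)\|^2\le\bigl(\sum_c\|X_{uc}\|^2\bigr)\bigl(\sum_c\|\zeta(c^{-1}v)\|^2\bigr)$. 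The decisive point is that the first factor depends only on $u$ and the second only on $v$, so summing the product over $(u,v)$ separates it as $\bigl(\sum_{u,c}\|X_{uc}\|^2\bigr)\bigl(\sum_{v,c}\|\zeta(c^{-1}v)\|^2\bigr)$; the covariance twist has been absorbed without coupling $u$ to $v$.

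Finally I would collapse the three shell sums using the multiplicity bound (\ref{Nsolut}) of property (J): each of $\sum_{u,c}\|X_{uc}\|^2$, $\sum_{v,c}\|\zeta(c^{-1}v)\|^2$ and $\sum_{u,v}\|\omega(uv)\|^2$ counts, for each fixed product, factorizations of the shape governed by (\ref{Nsolut}) (with the parameters $\mu,\nu$ matched to $\a,\g,\b'$), hence each is at most $N$ times $\sum_g\|X_g\|^2$, $\|\zeta\|^2$, $\|\omega\|^2$ respectively. This gives $|B_p|\le N^{3/2}\bigl(\sum_g\|X_g\|^2\bigr)^{1/2}\|\zeta\|\,\|\omega\|$ uniformly in $p$, and summing the $k+1$ blocks yields the asserted inequality with constant $N^{3/2}$ (renamed $N$). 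I expect the genuine work to be combinatorial rather than analytic: the Cauchy--Schwarz steps are routine, and the real content lies in verifying the shell memberships in the factorization from (J)(iii) and, above all, in matching each of the three multiplicity counts to (\ref{Nsolut}) with correctly chosen parameters. That bookkeeping, together with keeping the covariance twist from entangling the two halves of the estimate, is the main obstacle.
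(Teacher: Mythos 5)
Your proposal is correct in outline and reaches the stated bound, but it realizes the estimate by a genuinely different analytic route than the paper, even though the combinatorial skeleton is identical in both: decompose into $k+1$ blocks by cancellation number, factor each pair through property (J)(iii), and invoke the multiplicity bound (\ref{Nsolut}) three times, once for each of the $X$-, $\zeta$- and $\omega$-type sums. The difference is where the analysis lives. You test $X$ against vectors $\zeta,\omega$ in the regular covariant representation on $\ell^2(G,H)$ and bound the matrix coefficient $\langle X\zeta,\omega\rangle$ by two scalar Cauchy--Schwarz applications, using that the $\a_g$ are isometric to neutralize the covariance twist; this is Haagerup's original Lemma~1.4 argument lifted verbatim to operator coefficients. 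The paper never leaves the algebra: it tests $X$ against finitely supported $Y\in\cc$ with $\|Y\|_\pi\le 1$, recovers the operator norm from the $\pi$-norm via Proposition \ref{cpnorm} (which needs Stinespring), and replaces your scalar Cauchy--Schwarz by the operator-valued Proposition \ref{csOpIneq}, factoring $S_p(g)=Q_{u_{(g,(k-p))}}m_gR_{v_{(g,(k-p))}}$ through polar-decomposition contractions. Your route is more elementary --- no Proposition \ref{cpnorm} and no operator polar decompositions, since the $\ell^2(G,H)$-norm splits exactly as $\sum_y\|\zeta(y)\|^2$ --- while the paper's route keeps the $Y$-coefficients as operators, which is precisely what produces the stronger mixed bound $\|S_p\|_\pi\le N\big(\sum_a\|X_a\|^2\big)^{1/2}\,\|\sum_bY_b^*Y_b\|^{1/2}$ underlying Corollary \ref{HagProp}; that refinement is invisible in your all-scalar estimate, though it is not needed for the proposition itself. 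One step in your write-up does need repair: you cannot reindex by ``enlarging the index set to the full product of shells'' with the convention $X_{uc}=0$, because the shell constraint on $v$ is $v\in C_{(|c^{-1}v|-p),\b'}$, which depends on the product $c^{-1}v$ rather than being a fixed shell; if $v$ is allowed to range freely, the count of $\sum_{v,c}\|\zeta(c^{-1}v)\|^2$ is not controlled by (\ref{Nsolut}) but degenerates to $|C_{p,\g}|\,\|\zeta\|^2$. The fix is to carry the validity indicators through both Cauchy--Schwarz steps and relax them only afterwards, asymmetrically: drop the $v$-dependence when bounding the $X$-factor (legitimate since $X$ is supported in $C_k$), but retain the relation between $c$, $v$ and $|c^{-1}v|$ when counting the $\zeta$-factor --- which is exactly the bookkeeping you flagged as the main obstacle, and exactly the counting step the paper performs after (\ref{sumSp}).
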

\begin{proof} 

We will let $Y$ denote any finitely supported  element in $\cc$ with column  or $\pi-$norm at most 1. By Proposition \ref{cpnorm} it is sufficient to bound the $\pi-$norm of $XY$ in order to bound the operator norm  $\|X\|.$  Using the cancellation numbers, the operator $XY$ may be written as a sum of $k+1$ summands $S_p$ defined as follows  
\begin{align} \label{JoSp}
\notag XY &= \sum_{a \in C_k} \sum_{b \in G} L_aX_aL_bY_b  \\
&= \sum_{p = 0}^k \sum_{\{(a,b) \in P_p \, : a \in C_k\}}L_aX_aL_bY_b \\ \notag &= \sum_{p=0}^k S_p 
\end{align}
 
Then let us fix a $p$ in the set $\{0, 1, \dots, k\}.$ By the property (J) there exists $\a >0, \hat \b>0, M >0$ such that to  each group element $g,$  each  $p$ and each pair of group elements $(a, b) \in P_p$ with $ab = g$ and $a \in C_k$ there exist group elements $u_{(g, (k-p))} ,\, v_{(g, (k-p))}, \, c(a,b) $ such that 
\begin{align}
&u_{(g,(k-p))} \in C_{(k-p), \a},\, \quad v_{(g,(k-p))} \in C_{(|g|+1 -k +p), \hat \b }, \quad c(a,b)  \in C_{p, M},\\ \notag \,& a= u_{(g,(k-p))}c(a,b) ,\, \quad b= c(a,b)^{-1}v_{(g,(k-p))} .
\end{align}
We have $|g| = |a|+|b| - 2p - c$ with $c \in \{0,1\},$ hence since $|a| = k $ we get $|g|+ 1 -k +p  = |b| -p +1 -c$ so for $\b:= \hat \b + 1 $ we have  \begin{equation} \label{Hag} v_{(g,(k-p))} \in C_{(|b| - p), \b}. 
\end{equation}  
In Haagerup's case with free groups we get $ u_{(g, (k-p))}  = a_1 \dots a_{(k-p)} ,\newline \, c(a,b)  = a_{(k-p+1)} \dots a _k, \, v_{(g, (k-p))}  = b_{(p+1)}\dots b_{|b|},$ with \newline $u_{(g, (k-p))} \in C_{(k-p)}, \, c(a,b)  \in C_p, \, v_{(g, (k-p))} \in C_{(|b|- k)}.$  In the case of a free non commutative group  the 2 first elements $u, c$ are determined by $a, p$ and the last 2 elements $c, v$ are determined by $b, p,$ but in the general case $a,p$ does not determine the pair $u_{(g, (k-p))} , c(a,b)$ nor does the pair $b,p$ determines the pair $c(a,b), v_{(g,(k-p))}.$ The condition (\ref{Nsolut}) is designed to deal with this problem.

We can now start the estimation, and we will use the results of Proposition \ref{csOpIneq}, so for a given $g$ we define a positive operator $Q_{u_{(g, (k-p))}}$ by \begin{align} \label{Q}
Q_{u_{(g, (k-p))}}^2 :&= \sum_{\{ (a,b) \in P_p: a \in C_k, \, ab =g \}}L_aX_aX_a^*L_a^*  . 
\end{align} 
To each pair $(a,b)$ in $P_p$ with $a$  in $C_k,$ and $ab= g$ there exists a contraction $q(a,b) $ such that $L_aX_a = Q_{u_{(g, (k-p))}} q(a,b)$ with  $$\sum_{\{(a,b ) \in P_p: a \in C_k, ab =g\}}q(a,b)q(a,b )^* \leq I.$$
Analogously we define $R_{v_{(g, (k-p))}}$ as the positive operator which is given by   
\begin{align} \label{GG} 
R_{v_{(g, (k-p))}}^2 &=
\sum_{\{(a,b) \in P_p : a \in C_k \, ab = g \}} Y_b^*Y_b 
\end{align}
To each group element $g$ and each pair $(a,b)$ in $P_p$ with $g = ab$ and $a$  in $C_k,$ there exists a contraction $r(a,b) $ such that $L_bY_b = r(a,b)R_{v_{(g, (k-p))}}$ with  $$\sum_{\{(a,b ) \in P_p: a \in C_k, ab =g\}}r(a,b)^*r(a,b ) \leq I,$$ and according to Proposition \ref{csOpIneq} we may define a contraction operator $m_g $ by 
\begin{equation}
\forall g \in G: \quad m_g := \sum_{\{(a,b) \in P_p : a \in C_k \, ab = g \}} q(a,b)r(a,b). 
\end{equation}
When combining these equations we get
\begin{equation}
S_p(g) = Q_{u_{(g, (k-p))}}m_gR_{v_{(g, (k-p))}}
\end{equation}
and then 
\begin{align} \label{sumSp}
& \sum_{g \in G} S_p(g)^*S_p(g) = \sum_{g \in G } R_{v_{(g, (k-p))}}m_g^*Q_{u_{(g, (k-p))}}^2m_gR_{v_{(g, (k-p))}} 
\text{by }  \|m_g \| \leq 1 \\  \notag \leq  & \sum_{g \in G} \|Q_{u_{(g, (k-p))}}^2\| R_{v_{(g, (k-p))}}^2 \text{ by }  (\ref{Q}) \text{ and } (\ref{GG}) \\ 
\notag  \leq & \sum_{g \in G} \big(\sum_{(a,b) \in P_p: ab =g }\|X_{u_{(g,(k-p))}c(a,b)}\|^2\big)
\cdot \\ \notag &
 \big(\sum_{(e,f) \in P_p: ef =g }Y_{c(e,f)^{-1}v_{(g,(k-p))}}^*Y_{c(e,f)^{-1}v_{(g,(k-p))}}\big)\text{ split  to sum over } h, \, g  \\ \notag \leq & \big( \sum_{g \in G} \sum_{(a,b) \in P_p: ab =g }\|X_{u_{(g,(k-p))}c(a,b)}\|^2\big)\cdot \\ \notag & \big(\sum_{h\in G} \sum_{(e,f) \in P_p: ef = h}Y_{c(e,f)^{-1}v_{(g,(k-p))}}^*Y_{c(e,f)^{-1}v_{(g,(k-p))}} \big)
\end{align} 

In the last 2 sums depending on $g$ and $h$ respectively, one can via the property (\ref{Nsolut}) get an upper bound on the number of times each element  of the form $\|X_a\|^2$ or $Y_f^*Y_f$ appears in the sum. Let $a$ in $C_k$ be given, then the  number of solutions to the equation  \begin{equation} \notag
u \in C_{(k-p), \a },\, c \in C_{p, M} : \quad uc = a
\end{equation} is at most $N.$ Similarly for each $f$  the number of solutions to the equation 
\begin{equation} \notag
v \in C_{(|f|-p), \b },\, c \in C_{p, M} : \quad c^{-1} v  = f
\end{equation} 
is at most $N,$ and hence by  (\ref{sumSp}) 

\begin{align} \label{normX}
\|S_p\|_\pi \leq & N\big(\sum_{a \in C_k}  \|X_a\|^2\big)^{\frac{1}{2}} \|Y\|_\pi \text{ so } \\ \notag \|X Y \|_\pi \leq & (k+1) N\big(\sum_{a \in C_k}  \|X_a\|^2\big)^{\frac{1}{2} }\|Y\|_\pi  \text{ by Proposition }  \ref{cpnorm}\\ \notag \|X\| \leq &(k+1) N\big(\sum_{a \in C_k}  \|X_a\|^2\big)^{\frac{1}{2}},
\end{align}
and the proposition follows 

\end{proof} 

In the case of a free non abelian group $\baf_d$ with  any set, finite or infinite,  of generators the proposition above holds with $N= 1.$ 
The reason is that for a pair $(a, b) $ in $P_p$ with $a \in C_k$  the decompositions $ a = u_{(g , (k-p))} c(a,b ) $ and $b  = c(a,b)^{-1}v_{(g, (k-p))} $
 are described in an exact form just below the equation (\ref{Hag}), such that the constants  $\a, \,\b, \,  \g$ in Definition \ref{RdDef} may be used with value 0. 
 The exact decomposition also shows that in this case there will only be one solution to the equation (\ref{Nsolut}) so we get $N=1$ in this case, and we may note the following corollary. 

\begin{corollary}
Let $\ca$ be a C*-algebra with an action $\a_g$ of $\baf_d,$ the free non commutative group with $d$ generators, then for any $k \in \bn_0$ and  any $X$ in $C^*_r(\ca)$ with finite  support in $C_k $:
$$ \|X\| \leq (k+1)\big( \sum_{a \in C_k} \|X_a\|^2\big)^{\frac{1}{2}}.$$ 
\end{corollary}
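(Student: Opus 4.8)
The plan is to deduce the corollary directly from the preceding Proposition, specialized to $G = \baf_d$, so that the whole task reduces to verifying that the free group satisfies property (J) in the very strong form $\a = \b = \g = 0$, for which the constant $N$ governing the counting condition (\ref{Nsolut}) equals $1$. First I would exploit the fact that the word length on $\baf_d$ is integer valued and that every element has a \emph{unique} reduced word. Writing $g = g_1\cdots g_{|g|}$ in reduced form and taking the integer $s = k-p$, I set $u_{(g,s)} := g_1\cdots g_s$ and $v_{(g,s)} := g_{s+1}\cdots g_{|g|}$, so that $|u_{(g,s)}| = s$ and $|v_{(g,s)}| = |g|-s$ exactly; this forces the neighborhoods in Definition \ref{RdDef}(i)--(ii) to collapse to single spheres. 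For $(a,b)\in P_p$ the cancellation in $\baf_d$ is exact, $|g| = |a|+|b|-2p$, and $c(a,b) := u_{(g,(k-p))}^{-1}a$ is precisely the block of letters of $a$ annihilated by $b$, of length exactly $p$; hence $\g=0$ (equivalently $M=0$) as well, and the identities $a = u_{(g,(k-p))}c(a,b)$ and $b = c(a,b)^{-1}v_{(g,(k-p))}$ of Definition \ref{RdDef}(iii) hold on the nose.

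The decisive step is to show that the counting bound (\ref{Nsolut}) holds with $N = 1$ when $\mu = \nu = 0$, which is the only place where the free structure is really needed. Fix $b$ and $0 \le p \le |b|$, and suppose $c,v$ satisfy $|c| = p$, $|v| = |b|-p$ and $c^{-1}v = b$. If any letters cancelled in the product $c^{-1}v$, then $|c^{-1}v| = |c| + |v| - 2j = |b| - 2j$ for some $j \ge 1$, contradicting $|c^{-1}v| = |b|$; thus the product is already reduced, $c^{-1}$ must be the length-$p$ prefix of the reduced word of $b$ and $v$ its complementary suffix. This pins down $(c,v)$ uniquely, so the cardinality in (\ref{Nsolut}) is exactly $1$. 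The identical no-cancellation argument applied to the factorization $uc = a$ with $|u| = k-p$ and $|c| = p$ shows that, for each fixed $a \in C_k$, the decomposition $a = u_{(g,(k-p))}c(a,b)$ is unique as well. Both counts being $1$, the factor $N$ produced in the estimate (\ref{sumSp})--(\ref{normX}) of the Proposition is $1$, and substituting $N = 1$ there yields exactly $\|X\| \le (k+1)\big(\sum_{a\in C_k}\|X_a\|^2\big)^{1/2}$.

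The step I expect to be the main obstacle is precisely this uniqueness count, together with the need to check that all the tolerance constants can be taken to be $0$ so that (\ref{Nsolut}) is applied with $\mu = \nu = 0$ rather than with positive radii. The danger is that a convention such as the $+1$ appearing in $C_{(|g|+1-s),\b}$ in Definition \ref{RdDef}(ii) might artificially inflate the relevant radius and thereby admit spurious extra factorizations, degrading $N$ from $1$ to a larger integer; the resolution is the observation made above that in $\baf_d$ the true length of $v_{(g,(k-p))}$ is exactly $|b|-p$, so that the set entering (\ref{Hag}) is the single sphere $C_{(|b|-p)}$ and any length discrepancy would force a strictly shorter reduced product and hence cannot arise. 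Once this exactness is in hand, the reduction to the Proposition is immediate and the constant $N = 1$ is justified, completing the proof.
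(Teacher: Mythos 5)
Your proposal is correct and takes essentially the same approach as the paper: both deduce the corollary by running the preceding Proposition for $\baf_d$ with $N=1$, justified by the exact prefix/suffix decompositions $a = u_{(g,(k-p))}c(a,b)$, $b = c(a,b)^{-1}v_{(g,(k-p))}$ in the free group. The only difference is one of detail---the paper merely asserts that the exact decomposition forces a unique solution in (\ref{Nsolut}), whereas you spell out the no-cancellation argument and the collapse of the tolerance radii in (\ref{Hag}) to zero.
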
 

It is worth to remark that the result in the corollary above in the case of the trivial C*-algebra $\ca = \bc I$ gives exactly the content of Lemma 1.5 in \cite{Ha}.  The content of Lemma 1.3 in \cite{Ha} is named the { \em Haagerup property  } in \cite{OR}.  
We may also here add a corollary describing the form the {\em Haagerup property}  takes in the setting of a reduced crossed product of a C*-algebra by a discrete group with the property (J)
This is the natural extension of Jolissaint's Proposition 3.2.4 in \cite{Jo}. 

\begin{corollary} \label{HagProp}
For $k,l,m $  non negative integers and any pair of finitely supported elements $X= \sum_g L_gX_g$ with support in $C_k$  and $Y = \sum_g L_gY_g$  with support in $C_l:$  

\begin{align*} & \|M_{\chi_m}*\big(XY\big) \|  \leq  N\big(\sum_{a\in C_k}\|X_a\|^2 \big)^{\frac{1}{2}}\|\sum_{b\in C_l}Y_b^*Y_b \|^{\frac{1}{2}}\\ 
& \|M_{\chi_m}*\big(XY\big) \|  \leq  N\|\sum_{a\in C_k}L_aX_aX_a^*L_a^*\|^{\frac{1}{2}}\big(\sum_{b\in C_l}\|Y_b\|^2\big)^{\frac{1}{2}}
\end{align*}

When $G$ is a free non abelian group $N=1.$ 
\end{corollary}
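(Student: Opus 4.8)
The plan is to re-run the estimate behind the preceding Proposition, but to exploit the fact that projecting $XY$ onto the length shell $C_m$ pins down the cancellation number, thereby removing the factor $(1+k)$. First I would expand
$$ XY = \sum_{a \in C_k}\sum_{b \in C_l} L_aX_aL_bY_b = \sum_{p\geq 0}\ \sum_{\substack{(a,b)\in P_p\\ a\in C_k,\ b\in C_l}} L_aX_aL_bY_b , $$
and apply the Hadamard multiplier $M_{\chi_m}$, which simply discards every term with $ab\notin C_m$. For a surviving pair one has $|ab| = |a|+|b|-2p-c$ with $c\in\{0,1\}$, and since $|a|\in(k-1,k]$, $|b|\in(l-1,l]$, $|ab|\in(m-1,m]$, the integer $p$ is forced to lie in an interval of length at most two; write $P(k,l,m)$ for this finite set of admissible values. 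Thus $M_{\chi_m}*(XY)=\sum_{p\in P(k,l,m)}W_p$, where $W_p$ collects the surviving pairs of fixed cancellation number $p$. In the free case lengths are integers, $c=0$ and $\alpha=\beta=\gamma=0$, so $P(k,l,m)$ is a single value; this is exactly where the factor $(1+k)$ of the Proposition disappears.

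For a fixed $p$ I would reuse the factorisation from the Proposition verbatim: property (J) splits $a=u_{(g,(k-p))}c(a,b)$ and $b=c(a,b)^{-1}v_{(g,(k-p))}$ for $g=ab$, producing positive coefficients $Q_{u_{(g,(k-p))}},R_{v_{(g,(k-p))}}\in\ca$ with $Q_{u_{(g,(k-p))}}^2=\sum_{(a,b)\in P_p,\,ab=g}L_aX_aX_a^*L_a^*$ and $R_{v_{(g,(k-p))}}^2=\sum_{(a,b)\in P_p,\,ab=g}Y_b^*Y_b$, together with contractions $m_g$ (with $\|m_g\|\leq1$, supported on $g$) such that $W_p=\sum_{g\in C_m}Q_{u_{(g,(k-p))}}m_gR_{v_{(g,(k-p))}}$.

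Here I would depart from the Proposition: rather than estimating the column norm of $W_p$, I would bound its operator norm directly through Proposition \ref{csOpIneq}(i). Taking $a_g:=Q_{u_{(g,(k-p))}}$ and $b_g:=m_gR_{v_{(g,(k-p))}}$ gives $W_p=\sum_g a_g^*b_g$, whence, using $m_g^*m_g\leq I$,
$$ \|W_p\|\ \leq\ \Big\|\sum_{g\in C_m}Q_{u_{(g,(k-p))}}^2\Big\|^{\frac12}\,\Big\|\sum_{g\in C_m}R_{v_{(g,(k-p))}}^2\Big\|^{\frac12}. $$
Now $\sum_{g}Q_{u_{(g,(k-p))}}^2=\sum_{(a,b)}L_aX_aX_a^*L_a^*$ and $\sum_{g}R_{v_{(g,(k-p))}}^2=\sum_{(a,b)}Y_b^*Y_b$ are sums of positive operators in which, by the multiplicity count of the Proposition (i.e. property (\ref{Nsolut})), each $L_aX_aX_a^*L_a^*$ and each $Y_b^*Y_b$ occurs at most $N$ times. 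The decisive point is that each factor may now be dominated in two ways: as a scalar, $\|\sum_{(a,b)}L_aX_aX_a^*L_a^*\|\leq\sum_{(a,b)}\|X_a\|^2\leq N\sum_{a\in C_k}\|X_a\|^2$, or as an operator, $\sum_{(a,b)}L_aX_aX_a^*L_a^*\leq N\sum_{a\in C_k}L_aX_aX_a^*L_a^*$, and symmetrically on the $Y$-side. Choosing the scalar bound on the $X$-factor and the operator bound on the $Y$-factor yields the first inequality for $W_p$; the opposite choice yields the second. Summing over the at most two values $p\in P(k,l,m)$ and absorbing their number into the constant completes the estimate, with constant exactly $N=1$ in the free case.

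The step I expect to be the main obstacle is the bookkeeping in the first paragraph: checking cleanly that the slack parameters $\alpha,\beta,\gamma$ and the correction $c$ confine $p$ to a set of bounded size (and, for $\baf_d$, to a single value), so that the sum over $p$ contributes only a bounded constant rather than the $(1+k)$ of the Proposition. A secondary point needing care is that restricting the index set to $\{b\in C_l,\ ab\in C_m\}$ does not spoil the multiplicity count inherited from the Proposition; since this only shrinks the family of admissible pairs, the bound ``at most $N$ occurrences'' is preserved, so both the scalar and the operator domination above remain valid.
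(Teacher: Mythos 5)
There is a genuine gap, and it is precisely at the point you dismissed as a ``secondary point needing care.'' Property (\ref{Nsolut}) bounds the number of \emph{factorizations} $(u,c)$, resp.\ $(c,v)$, of a \emph{fixed} group element $a$, resp.\ $b$; it says nothing about how many partners $b$ a fixed $a$ can have. In your symmetric sums
$\sum_{g\in C_m}Q_{u_{(g,(k-p))}}^2=\sum_{(a,b)}L_aX_aX_a^*L_a^*$ and $\sum_{g\in C_m}R_{v_{(g,(k-p))}}^2=\sum_{(a,b)}Y_b^*Y_b$,
the term belonging to a fixed $a$ occurs once for \emph{every} admissible $b$ (and vice versa), and this multiplicity is not controlled by $N$: already in $\baf_d$ a fixed $a\in C_k$ has on the order of $(2d-1)^{l-p}$ partners $b\in C_l$ with $(a,b)\in P_p$ and $ab\in C_m$, so your claimed bounds $\|\sum_{(a,b)}L_aX_aX_a^*L_a^*\|\leq N\sum_{a\in C_k}\|X_a\|^2$ and $\sum_{(a,b)}Y_b^*Y_b\leq N\sum_{b\in C_l}Y_b^*Y_b$ are simply false. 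The paper's Proposition never estimates these two sums. Its mechanism is essentially asymmetric: it replaces $Q_{u_{(g,(k-p))}}^2$ by the \emph{scalar} $\|Q_{u_{(g,(k-p))}}^2\|$, which is dominated by a quantity depending only on $u_{(g,(k-p))}$, and then uses that $g\mapsto\big(u_{(g,(k-p))},v_{(g,(k-p))}\big)$ is injective (since $u_{(g,(k-p))}v_{(g,(k-p))}=g$) to dominate $\sum_g\|Q^2\|R^2$ by a \emph{product} of two decoupled sums, one indexed by elements $u$ and their extensions $uc$, the other by $v$ and the products $c^{-1}v$; only in those decoupled sums does each $X_a$ or $Y_b$ appear at most $N$ times, by (\ref{Nsolut}). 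Your Cauchy--Schwarz over $g$ discards exactly this decoupling, and the need for one scalarized factor is also why the corollary contains two mixed inequalities rather than one: had your symmetric argument worked, it would prove the paper's ``desired inequality'' (operator type on both sides), which the author explicitly states he could not obtain.

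Moreover, the statement you set out to prove is stronger than what the paper proves, and in the operator-norm reading it is false with the stated constant. Take $\ca=\bc$, $G=\baf_d$ ($d\geq 2$), $X=Y=\sum_{|g|=1}\lambda_g$, $k=l=1$, $m=2$. Then $M_{\chi_2}*(XY)=\sum_{|w|=2}\lambda_w=X^2-2d$, and by Kesten's theorem the spectrum of $X$ is $[-2\sqrt{2d-1},\,2\sqrt{2d-1}]$, so $\|M_{\chi_2}*(XY)\|_{op}=6d-4$, while the right-hand side with $N=1$ equals $2d$. Thus the left-hand norm in Corollary \ref{HagProp} has to be understood as the column ($\pi$-)norm, which is exactly what the paper's proof delivers: it observes that fixing $k,l,m$ forces a single cancellation number $p$ (integer word lengths give $k+l-m=2p+c$ with unique $p,c$), quotes the estimate $\|S_p\|_\pi\leq N\big(\sum_{a\in C_k}\|X_a\|^2\big)^{\frac12}\|Y\|_\pi$ from (\ref{normX}), and obtains the second inequality by applying the first to $(XY)^*=Y^*X^*$ (which exchanges column and row data), not by re-running the estimate with the roles of the two factors exchanged. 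A minor further discrepancy: since you allow a window of two or three admissible values of $p$, your constant is $|P(k,l,m)|\,N$ rather than the asserted $N$.
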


\begin{proof}
Let $a \in C_k$ and $b \in C_l$ be such that $ab \in C_m$ then there exists uniquely determined  $c \in \{0, 1\} $ and $p$ in $\bn_0$ such that $$ k+l -m = 2p+c . $$ In particular at most one $S_p \neq 0, $  and the first inequality of the corollary follows from the proof of the theorem. The second follows from the first when  applied to $(Y^*X^*).$ The free group statement follows from the corollary just above. \end{proof} 

The fact that there are 2 inequalities above indicates to us that there might be a hope for the {\em desired    inequality } below to be true  for  a finitely supported $X$ with support in $C_k$ and a finitely supported $Y $ with support in $C_l$ we hope that 
\begin{align*}
& \text{ desired inequality }\\ 
& \|M_{\chi_m}*\big(XY\big) \|  \leq  N\|\sum_{a\in C_k}L_aX_aX_a^*L_a^*\|^{\frac{1}{2}}\|\sum_{b\in C_l}Y_b^*Y_b \|^{\frac{1}{2}}
\end{align*}

We may then continue and consider general elemnts $X$ with finite support. 

\begin{theorem} \label{ThmJo}
Let $G$ be a discrete group with a length function  such that $G$ satisfies the condition (J). There exists an $M>0$ such that for any action $\a_g$ of $G$ on a C*-algebra $\ca$ and any $X = \sum_{g \in G}  L_gX_g$ of finite support in $C^*_r(\ca \rtimes_\a G):$
$$\|X\| \leq M  \sqrt{\sum_{g \in G} (1+|g|)^4 \|X_g\|^2}.$$  
\end{theorem}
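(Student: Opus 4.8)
The plan is to decompose a general finitely supported $X = \sum_{g\in G} L_g X_g$ according to length shells, writing $X = \sum_{k=0}^{\infty} X^{(k)}$ where $X^{(k)} := \sum_{g \in C_k} L_g X_g$ is the part supported on the shell $C_k$. Each piece $X^{(k)}$ is exactly the kind of shell-supported operator handled by the preceding Proposition, which gives $\|X^{(k)}\| \leq N(1+k)\big(\sum_{g\in C_k}\|X_g\|^2\big)^{1/2}$. The whole argument then reduces to controlling the operator norm of a sum of pieces by the norms of the individual pieces, at the cost of the extra weight $(1+|g|)^4$ rather than $(1+|g|)^2$.

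First I would apply the triangle inequality together with the Cauchy--Schwarz trick that already appears in the introduction's warm-up (the $C^*_r(\bz)$ proposition): inserting the factor $1 = \frac{1}{1+k}(1+k)$ into each shell term and applying Cauchy--Schwarz over $k \in \bn_0$ gives
\begin{align*}
\|X\| \;\leq\; \sum_{k=0}^{\infty} \|X^{(k)}\|
&\;\leq\; \sum_{k=0}^{\infty} \frac{1}{1+k}\,(1+k)\|X^{(k)}\| \\
&\;\leq\; \Big(\sum_{k=0}^{\infty} \frac{1}{(1+k)^2}\Big)^{\!1/2}
\Big(\sum_{k=0}^{\infty} (1+k)^2\|X^{(k)}\|^2\Big)^{\!1/2}.
\end{align*}
The first factor is $\big(\tfrac{\pi^2}{6}\big)^{1/2}$, a fixed constant. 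Into the second factor I would substitute the Proposition's bound $\|X^{(k)}\|^2 \leq N^2(1+k)^2\sum_{g\in C_k}\|X_g\|^2$, which contributes another $(1+k)^2$, so that the summand becomes $N^2(1+k)^4\sum_{g\in C_k}\|X_g\|^2$.

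Finally I would reassemble the double sum over $k$ and over $g \in C_k$ into a single sum over $g \in G$, using that on the shell $C_k$ one has $k-1 < |g| \leq k$, so $(1+k)^4$ and $(1+|g|)^4$ differ only by a bounded multiplicative constant (indeed $1+k \leq 2+|g| \leq 2(1+|g|)$, giving a factor of at most $16$). This yields
$$\|X\| \;\leq\; \frac{\pi}{\sqrt{6}}\,N\,\Big(\sum_{k=0}^\infty (1+k)^4\!\!\sum_{g\in C_k}\!\|X_g\|^2\Big)^{1/2}
\;\leq\; M\sqrt{\sum_{g\in G}(1+|g|)^4\|X_g\|^2},$$
with $M := \frac{\pi}{\sqrt{6}}\,N\cdot 4$ (absorbing the $16$ under the square root). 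The shell Proposition is what makes everything work, and the whole theorem is essentially just summing its estimate against the convergent series $\sum (1+k)^{-2}$.

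I do not expect a serious obstacle here, since the hard analytic work — the cancellation-number decomposition, the column/row norm estimates, and the use of property (J) — is entirely contained in the preceding Proposition. The only points requiring a little care are the interchange of the $k$-summation with the reassembly into a sum over $G$ (legitimate because $X$ has finite support, so all sums are finite), and the bookkeeping of the constant when passing from $(1+k)$ to $(1+|g|)$ on each shell. The exponent $4$ in the conclusion, versus the exponent $2$ in Jolissaint's classical rapid-decay inequality \eqref{Jo1}, is precisely the price paid by this two-stage Cauchy--Schwarz argument: one factor $(1+k)^2$ comes from the shell Proposition and the other from the summation over shells.
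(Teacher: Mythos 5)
Your proposal is correct and follows essentially the same route as the paper: triangle inequality over the shells $C_k$, the shell Proposition with constant $N(1+k)$, Cauchy--Schwarz against $\sum_k (1+k)^{-2} = \pi^2/6$, and reassembly into a sum over $G$ using $k-1 < |g| \leq k$. The only (immaterial) differences are that the paper applies the shell Proposition before Cauchy--Schwarz rather than after, and your bookkeeping of the constant in passing from $(1+k)^4$ to $(1+|g|)^4$ is in fact more careful than the paper's stated factor $\sqrt{2}$.
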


\begin{proof}
We may proceed as in the proof of Lemma 1.5 in \cite{Ha}, so 
\begin{align*}
\|X\| \leq & \sum_{ k=0}^\infty \| \sum_{g \in C_k} X_g \| \\ \leq & N \sum_{k=0}^\infty (k+1)^{-1} \bigg((k+1)^2\big( \sum_{g \in C_k} \|X_g\|^2\big)^{\frac{1}{2}}\bigg)\\ \notag 
\leq & N \frac{\pi}{\sqrt{6}}\bigg(\sum_{k=0}^\infty (k+1)^4 \sum_{g \in C_k} \|X_g\|^2 \bigg)^{\frac{1}{2}} \\  \notag 
\leq & N \frac{\pi}{\sqrt{6}}\sqrt{2} \bigg(\sum_{g \in G}(1+ |g|)^4  \|X_g\|^2 \bigg)^{\frac{1}{2}}
\end{align*}  and  the theorem follows, since for  $g \in C_k$ we have $k-1 < |g| \leq k.$  
\end{proof}

Again there is a sharper estimate in the case of a free non abelian group.

\begin{corollary}
If $G$ is a free non abelian group the constant $M$ may be chosen as $M=2.$ 
\end{corollary}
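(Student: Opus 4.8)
The plan is to treat this corollary as the free-group specialisation of Theorem~\ref{ThmJo}: the only place where the group enters the proof of that theorem is through the constant $N$ supplied by the opening Proposition of this section, so I would simply insert the optimal value of $N$ for $\baf_d$ and check that the resulting constant falls below $2$.

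First I would recall that for a free non abelian group the opening Proposition holds with $N=1$; this is precisely the free-group corollary recorded just after it. Its validity rests on the fact that in $\baf_d$ the factorisations $a=u_{(g,(k-p))}c(a,b)$ and $b=c(a,b)^{-1}v_{(g,(k-p))}$ are the literal splittings of reduced words, so the constants $\a,\b,\g$ may all be taken to be $0$ and the count in condition~(\ref{Nsolut}) is exactly one. Consequently, for $X$ supported in a single shell $C_k$ one has the sharp estimate $\|X\|\le (k+1)\big(\sum_{g\in C_k}\|X_g\|^2\big)^{1/2}$.

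Next I would run the four-line computation proving Theorem~\ref{ThmJo} verbatim with $N=1$: the triangle inequality over the shells $C_k$, the sharp single-shell bound above, the Cauchy--Schwarz step over $k$ producing $\sum_{k\ge 0}(k+1)^{-2}=\pi^2/6$, and the comparison of $(k+1)^4$ with $(1+|g|)^4$ on $C_k$. Writing $S:=\big(\sum_{g\in G}(1+|g|)^4\|X_g\|^2\big)^{1/2}$, this yields
\[
\|X\|\;\le\;\frac{\pi}{\sqrt 6}\,\sqrt 2\,S\;=\;\frac{\pi}{\sqrt 3}\,S,
\]
and since $\pi/\sqrt 3\approx 1.814<2$ the choice $M=2$ is admissible, which is the assertion.

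There is essentially no obstacle remaining at this level; the substantive work has already been carried out in establishing $N=1$ for free groups, and the only thing left to verify is the elementary numerical inequality $\pi/\sqrt 3<2$. I would remark, as a refinement, that the word length on $\baf_d$ is integer valued, so a nonidentity element of $C_k$ satisfies $|g|=k$ exactly and hence $(1+|g|)^4=(k+1)^4$; the slack factor $\sqrt 2$ inherited from the general proof is then unnecessary, and one may in fact record the sharper constant $\pi/\sqrt 6\approx 1.28$. Since the statement only demands $M=2$, presenting the crude bound $\pi/\sqrt 3<2$ already suffices.
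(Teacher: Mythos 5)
Your proposal is correct and is exactly the argument the paper intends (the paper leaves it implicit): substitute $N=1$, valid for free groups, into the proof of Theorem~\ref{ThmJo} to obtain the constant $\frac{\pi}{\sqrt{6}}\sqrt{2}=\frac{\pi}{\sqrt{3}}\approx 1.81<2$. Your further remark that the factor $\sqrt{2}$ is superfluous for integer-valued word length, giving the sharper constant $\pi/\sqrt{6}$, is also correct.
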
  

\section{Applications}

The theory of {\em rapid decay } for group C*-algebras has been applied to various types of approximation properties  for
operator algebras \cite{CI}, \cite{CCJJV} \cite{CH}, \cite{HK},  and many research  articles are based on, or inspired by these works. 

\smallskip  
Jolissaint realized from the beginning \cite{J2} that the rapid decay property makes it possible to base some K-theoretical computations on a subalgebra of {\em rapidly decreasing  operators,  } and this was then used by Lafforgue  \cite{La} in his fundamental work on   the Baum-Connes conjecture.

\smallskip
The construction of a spectral triple for a reduced group C*-algebra of a discrete group, which occurs in Connes' non commutative geometry, has an obvious candidate if the group has a length function. It seems natural to use the property {\em rapid decay } to get some information on the properties of this spectral triple, and such attempts have appeared in \cite{AC} and \cite{OR}. It is interesting to see that the so-called {\em Haagerup } condition of \cite{OR} is the content of the very basic Lemma 1.3 of \cite{Ha} and of Proposition 3.2.4 of  \cite{Jo}. Here it is contained in the corollary \ref{HagProp}.

\smallskip
It is not our intent to pursue  possible extensions of the results based on {\em rapid decay} from the group algebra case to the crossed product setting, but we have made one easy  observation, which may be applied to a possible extension of some of the  approximation properties. 

In Haagerup's first article \cite{Ha} he shows in Lemma  1.7 that for a function $\f$ on a free non abelain group $G$ the multiplier $M_\f$ is bounded if $\sup\{|\f(g)|(1 + |g|)^2: g \in G\}$ is finite and $\|M_\f\| \leq 2 \sup\{|\f(g)|(1 + |g|)^2: g \in G\}.$ This result makes it possible for him to cut the completely positive multiplier of norm 1 given by $M_{\f_\l} $ with $\f_\l(g) := e^{-\l |g|}$ to the subsets $ B_n,$ and in this way he obtains a bounded approximate multiplier unit consisting of functions with finite support.   We can not obtain such a nice result here  because  Haagerup's estimate is based on the fact that in the group algebra case we have $\|\l(f)\| \geq \|f\|_2,$ and the analogous statement for crossed products is not true. We can get a result which is is similar to Haagerup's Lemma 1.7 for a group action which has operator rapid decay.
\begin{proposition} \label{Mult}
Let $\ca$ be a C*-algebra, G a discrete group with a length function and $\a_g$ an action of the group on $\ca$ such that the reduced crossed product has operator rapid decay  with coefficients $C, s.$ If a complex function $\f$ on the group satisfies $ m:= \sup\{|\f(g)|(2+|g|)^{(s+1)}\, : \, g \in G\} < \infty $ then the multiplier $M_\f$   on $C^*_r(\ca \rtimes_\a G) $ is  bounded and satisfies $\|M_\f\| \leq 2Cm.$ If the action of $\a_g$ has complete operator rapid decay, then $M_\f$ is completely bounded with $\|M_\f\|_{cb } \leq 4Cm.$
\end{proposition}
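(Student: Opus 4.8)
The plan is to feed the image $M_\f X$ directly into the operator rapid decay hypothesis and to exploit that rescaling the Fourier coefficients of $X$ by $\f(g)$ only introduces the \emph{scalar} factors $|\f(g)|^2$ into the positive operator that governs the norm. First I would note that for finitely supported $X = \sum_g L_g X_g$ the element $M_\f X = \sum_g L_g \f(g) X_g$ is again finitely supported, so operator rapid decay applies to it with coefficients $\f(g)X_g$. Pulling the scalars out of the positive operator gives
\begin{equation*}
\|M_\f X\| \le C \Big\| \sum_{g \in G} (1+|g|)^{2s}\,|\f(g)|^2\big(L_g X_g X_g^* L_g^* + X_g^* X_g\big)\Big\|^{\frac12}.
\end{equation*}

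The next step is to estimate the scalar weight. Since $(1+|g|) \le (2+|g|)$, the hypothesis $|\f(g)| \le m(2+|g|)^{-(s+1)}$ yields
\begin{equation*}
(1+|g|)^{2s}|\f(g)|^2 \le m^2\,\frac{(1+|g|)^{2s}}{(2+|g|)^{2s+2}} \le \frac{m^2}{(2+|g|)^2} \le \frac{m^2}{4},
\end{equation*}
the decisive point being that the extra exponent $s+1$ (rather than $s$) leaves a uniformly bounded weight. Because each $A_g := L_g X_g X_g^* L_g^* + X_g^* X_g$ is positive and the weights are nonnegative scalars bounded by $m^2/4$, the comparison $\sum_g w_g A_g \le (\sup_g w_g)\sum_g A_g$ for positive $A_g$ gives $\big\|\sum_g (1+|g|)^{2s}|\f(g)|^2 A_g\big\| \le \tfrac{m^2}{4}\big\|\sum_g A_g\big\|$. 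The column/row norm identities recalled in Section 2, namely $\|\sum_g X_g^* X_g\| = \|(X^*X)_e\| \le \|X\|^2$ and $\|\sum_g L_g X_g X_g^* L_g^*\| = \|(XX^*)_e\| \le \|X\|^2$, then give $\|\sum_g A_g\| \le 2\|X\|^2$. Combining the three estimates produces a bound of the form $\|M_\f X\| \le \tfrac{Cm}{\sqrt2}\|X\|$, comfortably within the claimed $\|M_\f\| \le 2Cm$.

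For complete boundedness I would amplify. Under the standard identification $M_n(\cc) \cong C^*_r\big(M_n(\ca) \rtimes_{\a \otimes \id} G\big)$, a matrix $[X^{(ij)}]$ with entries $X^{(ij)} = \sum_g L_g X^{(ij)}_g$ corresponds to $\sum_g L_g \mathbf{X}_g$ with $\mathbf{X}_g = [X^{(ij)}_g]\in M_n(\ca)$, and $M_\f \otimes \id_{M_n}$ is carried to the $\f$-multiplier of this larger crossed product. By hypothesis the action has \emph{complete} operator rapid decay, so the argument of the preceding paragraph applies verbatim to the $M_n(\ca)$-system with the same constants $C,s$ and the same $m$ (which depends only on $\f$ and the length function). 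This bounds $\|M_\f \otimes \id_{M_n}\|$ uniformly in $n$, giving $\|M_\f\|_{\mathrm{cb}} \le 4Cm$.

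I expect the only genuinely delicate point to be the bookkeeping in the last paragraph: one must verify that matrix amplification of the reduced crossed product is again a reduced crossed product (now over $M_n(\ca)$) and that this identification intertwines the two multipliers, so that complete operator rapid decay — the hypothesis that operator rapid decay holds for \emph{every} coefficient algebra — may be invoked with unchanged constants. The scalar estimates of the first two paragraphs are routine once the positivity comparison is noted, and the gap between the computed constant $Cm/\sqrt2$ and the stated $2Cm$ (respectively $4Cm$) simply reflects that no optimization of the weight bound is required.
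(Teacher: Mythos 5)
Your proof is correct, and although it shares the paper's overall skeleton --- apply the operator rapid decay hypothesis to $M_\f * X = \sum_g L_g \f(g)X_g$, estimate the resulting weight, and amplify over $M_n(\ca)$ for the complete boundedness statement --- the key middle estimate is handled by a genuinely different and in fact sharper route. The paper keeps the decaying weight $m^2(1+|g|)^{-2}$, splits the sum over the shells $C_k$, applies the triangle inequality shell by shell, and uses the summability $\sum_k (1+k)^{-2} = \pi^2/6$; its displayed chain ends at $16C^2m^2\|X\|^2$, i.e.\ the bound $4Cm\|X\|$. You instead bound the weight uniformly by $m^2/4$ and invoke positivity ($\sum_g w_g A_g \le (\sup_g w_g)\sum_g A_g$ for $A_g \ge 0$) together with the \emph{global} column/row identities $\|\sum_g X_g^*X_g\| = \|(X^*X)_e\| \le \|X\|^2$ and $\|\sum_g L_gX_gX_g^*L_g^*\| = \|(XX^*)_e\| \le \|X\|^2$, which rest on the same contractivity of $Y \mapsto Y_e$ that the paper uses per shell. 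This gives $\|M_\f\| \le Cm/\sqrt{2}$, which actually delivers the stated constant $2Cm$ (the paper's own computation only reaches $4Cm$), and it shows the hypothesis could even be relaxed to $\sup_g |\f(g)|(1+|g|)^s < \infty$, since only boundedness, not decay, of the weight is used. What the paper's shell decomposition buys in exchange is that it consumes only the per-shell norms $\|\sum_{g\in C_k}(L_gX_gX_g^*L_g^* + X_g^*X_g)\|$: these are precisely the quantities controlled by the hypothetical \emph{mixed type} rapid decay of Section 4, and the paper remarks there that such a condition would suffice for multiplier bounds ``as it follows from the proof of Proposition \ref{Mult}''; your global positivity step would not survive under that weaker hypothesis. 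Your amplification argument for the cb-norm is the same as the paper's one-sentence argument, just spelled out in more detail.
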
 
\begin{proof}
Suppose $\f$ is given with $m$ finite then for any $X = \sum L_gX_g $ with finite support
 \begin{align}
\|M_\f*X\|^2 & \leq C^2\| \sum_{g \in G}(1+|g|)^{2s} |\f(g)|^2 ( L_gX_gX_g^*L_g^* + X_g^*X_g) \|
\\ \notag & \leq C^2\| \sum_{g \in G}(1+|g|)^{-2}m^2  ( L_gX_gX_g^*L_g^* + X_g^*X_g) \|
\\\notag & \leq  4 C^2m^2 \sum_{k =0}^\infty  (1+k)^{-2}\| \sum_{g \in C_k}  L_gX_gX_g^*L_g^* + X_g^*X_g\| \\\notag & \leq  8 C^2m^2 \frac{\pi^2}{6} \|X\|^2 \\ \notag &\leq 16C^2 m^2\|X\|^2. 
\end{align}

If $G$ possesses complete operator rapid decay, the action $\a_g$ on $\ca$ may be lifted to actions on $M_n(\ca),$ which all have operator rapid decay with coefficients $C,s$ and the result follows. 
\end{proof}

\end{document}